\newtheorem{corollary}{Corollary}[section]
\newtheorem{theorem}[corollary]{Theorem}
\newtheorem{lemma}[corollary]{Lemma}
\newtheorem{proposition}[corollary]{Proposition}
\newtheorem{remark}[corollary]{Remark}
\numberwithin{equation}{section}
\title{Approximating the volume of a truncated relaxation of the independence polytope}
\author{Ferenc Bencs\footnote{Centrum Wiskunde \& Informatica, P.O. Box 94079 1090 GB Amsterdam, The Netherlands. Email: \texttt{ferenc.bencs@gmail.com}. Funded by the Netherlands Organisation of Scientific Research (NWO): VI.Veni.222.303}, 
Guus Regts\footnote{Korteweg de Vries Institute for Mathematics, University of Amsterdam. P.O. Box 94248 1090 GE Amsterdam  The Netherlands. Email: \texttt{guusregts@gmail.com}. Funded by the Netherlands Organisation of Scientific Research (NWO): VI.Vidi.193.068}}
\date{\today}
\begin{document}

\maketitle

\begin{abstract}
Answering a question of Gamarnik and Smedira~\cite{gamarnik2023computing}, we give a polynomial time algorithm that approximately computes the volume of a truncation of a relaxation of the independent set polytope, improving on their quasi-polynomial time algorithm.
Our algorithm is obtained by viewing the volume as an evaluation of a graph polynomial and we approximate this evaluation using Barvinok's interpolation method.
\end{abstract}

\section{Introduction}
Approximately computing the volume of a polytope and more generally a convex body is one of the success stories of randomized algorithms.
Indeed, Dyer, Frieze and Ravi~\cite{randomvolume} showed that with access to a membership oracle there is a randomized polynomial time algorithm to approximately compute the volume of a convex body, whereas no such deterministic algorithm can exist by a result of B\'ar\'any and F\"uredi~\cite{volumeisdifficult}.

Partly motivated by the question whether randomized algorithms are superior over deterministic algorithms in the context of approximately computing volumes of polytopes, Gamarnik and Smedira~\cite{gamarnik2023computing} considered the question of approximately computing the volume of a standard relaxation of the independent set polytope of a graph $G=(V,E)$ which is defined as
\[
P_G:=\{x\in [0,1]^V\mid x_u+x_v\leq 1 \text{ for all } uv\in E\}.
\]
In fact for a bipartite graph $G$, $P_G$ is equal to the independent set polytope of $G$~\cite{Schrijver}*{Theorem 19.7}. 
Since $P_G$ is given by inequality constraints, testing membership in $P_G$ can be done efficiently and hence there is an efficient randomized algorithm for approximately computing its volume~\cite{randomvolume}.
As far as we know, no efficient deterministic algorithm is known for this task for bounded degree graphs. For related result when the girth of $G$ tends to infinity see \cite{GamarnikKavitacontinuous}.

The choice of polytope $P_{G}$ may seem a bit arbitrary, but not much appears to be known about deterministic approximation of the volume of polytopes (see~\cites{Barvinoktransportationpolytope,barvinok2021quick} for deterministic approximations of the volume of transportation polytopes) and as such it is as good a choice as any other. 
Moreover, since integration can be seen as an idealized form of counting~\cite{gamarnik2023computing}, the polytope $P_G$ actually presents itself as a rather natural candidate to test our ability to design efficient deterministic approximation algorithms for computing volumes. 
Indeed, in the `counting world' it is known that for the problem of approximately computing the number of independent sets in a bounded degree graph (and more generally approximately computing evaluations of its independence polynomial) randomized algorithms doe not outperform deterministic algorithms in the sense that for graphs of maximum degree at most $5$ both efficient deterministic and randomized (Markov chain based) polynomial time algorithms exist for this task~\cites{weitz,spectral-hardcore}, while if the maximum degree is at least $6$ approximating the number of independent sets is \textsc{NP}-hard~\cites{slysun,GSVhardness}\footnote{This transition in the computational complexity of approximate counting in fact extends to evaluating the independence polynomial on bounded degree graphs.}.
The Markov chain based randomized algorithms do tend to have faster running times compared to their deterministic counterparts~\cites{spectral-hardcore,spectraloptimal,weitz,PR19}.
Incidentally the randomized algorithm for approximating volumes of Fyer Frieze and Ravi~\cite{randomvolume} is also based on Markov chains and as such it is natural to try to use techniques from approximate counting to design deterministic approximation algorithms in the context of volume approximation.
%Moreover, the randomized algorithms that are most commonly used for approximate counting are based on Markov chains, just as the algorithm for volume estimation. \footnote{As a historical side note, the first deterministic algorithm for approximately evaluating the independence polynomial~\cite{weitz} was developed about 15 years before the first proof of rapid mixing of the relevant Markov chain was found~\cite{spectral-hardcore}.}

Gamarnik and Smedira~\cite{gamarnik2023computing} in fact considered a truncation of $P_G$, defined as follows.
For $\delta\in [0,1/2]$ and a graph $G=(V,E)$ let
\begin{equation}
P_{G,\delta}:=\{(x_v)_{v\in V}\in [0,1/2+\delta]^V\mid x_u+x_v\leq 1 \text{ for all } uv\in E \}   
\end{equation}
and note that $P_{G,1/2}$ is equal to $P_G$.
Using the successful correlation decay method from approximate counting~\cites{weitz,BGcountingwithoutsmapling} Gamarnik and Smedira~\cite{gamarnik2023computing} managed to design an algorithm that approximately compute the volume of a $P_{G,\delta}$ for $\delta=O(1/\Delta(G)^2)$ in time $n^{O(\log n)}$, where $n$ denotes the number of vertices of $G$.

Our main result improves on their result, both in terms running time as well as in terms of the bound on $\delta.$

\begin{theorem}\label{thm:main}
Let $\Delta>0$. There exists a constant $C>0$ such that for each and $\delta\in [0,C/\Delta]$ there exists an algorithm that on input of an $n$-vertex graph of maximum degree at most $\Delta$ and $\varepsilon\in (0,1)$ computes a number $\xi$ such that
\[
(1-\varepsilon)\xi\leq \mathrm{vol}(P_{G,\delta})\leq (1+\varepsilon)\xi
\]
in time polynomial in $n/\varepsilon$.
\end{theorem}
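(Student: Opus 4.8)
The plan is to realize $\mathrm{vol}(P_{G,\delta})$ as the evaluation of a suitable graph polynomial at a particular point, and then apply Barvinok's interpolation method, which requires us to show that this polynomial has no complex zeros in a disk around that point (after a suitable change of variables). First I would observe that integrating out the $[0,1/2+\delta]^V$ box with the edge constraints $x_u + x_v \le 1$ naturally suggests a cluster-expansion / transfer-matrix style decomposition. Concretely, write $\mathrm{vol}(P_{G,\delta}) = \int_{[0,1/2+\delta]^V} \prod_{uv \in E} \mathbf{1}[x_u + x_v \le 1]\, dx$. The key trick (already used by Gamarnik--Smedira in the correlation-decay setting) is that $x_u + x_v \le 1$ is automatically satisfied whenever both $x_u, x_v \le 1/2$; the constraint only "bites" when at least one coordinate exceeds $1/2$. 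So I would split each coordinate's integration range $[0,1/2+\delta] = [0,1/2] \cup (1/2, 1/2+\delta]$ and expand the product of indicators over the $2^n$ choices of which vertices are "high". This expresses $\mathrm{vol}(P_{G,\delta})$ as a sum over subsets $S \subseteq V$ of terms $(1/2)^{n-|S|}$ times an integral over $x_S \in (1/2,1/2+\delta]^S$ of $\prod_{uv \in E}\mathbf{1}[x_u+x_v\le 1]$ — and crucially, in this integral the only surviving constraints are those edges with at least one endpoint in $S$, and when both endpoints are in $S$ we need $x_u + x_v \le 1$, while for an edge with exactly one endpoint $u \in S$ we need $x_u \le 1 - x_v$ for the corresponding low vertex, which after integrating the low vertex contributes a factor like $1 - x_u$ (or a quadratic in $x_u$). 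This should collapse to a polynomial $Z_G(\lambda)$ in a variable $\lambda \sim \delta$ whose coefficients are indexed by induced subgraphs, i.e.\ an independence-polynomial-like object with a multiplicative-over-components structure.

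**Zero-freeness.**

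The heart of the argument is to prove that this polynomial $Z_G(\lambda)$ is zero-free in a complex disk of radius $\Omega(1/\Delta)$ (or a Barvinok-style region avoiding $0$) around the relevant evaluation point, for all graphs of maximum degree at most $\Delta$. I expect this to be the main obstacle. The natural route is the Kotecký--Preiss / cluster-expansion criterion or, in the polynomial-method language, an inductive argument à la the "zero-free via contraction of a ratio" technique (the approach behind zero-freeness proofs for the independence polynomial up to the Shearer bound): one sets up ratios of $Z$ for the graph with a vertex removed versus present, shows these ratios stay in a bounded region of $\mathbb{C}$ under the recursion, provided $|\lambda| \le C/\Delta$. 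One subtlety versus the standard hard-core model is that the "activity" attached to a high vertex $v$ is not a scalar but depends, through the factors $1 - x_w$ from its low neighbors, on integrals that couple to the rest — so I would first integrate out all purely-low vertices to get a clean polynomial in $\lambda$ with bounded coefficients, then verify the coefficients decay geometrically with rate compatible with a Kotecký--Preiss bound at $|\lambda| \lesssim 1/\Delta$. Getting the constant $C$ explicit is not needed, only its existence.

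**From zero-freeness to the algorithm.**

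Once zero-freeness in a disk $D$ of radius $> |\lambda_0|$ (the target point) around $0$ is established — together with the trivial fact $Z_G(0) = (1/2)^n \ne 0$ — Barvinok's method gives a quasi-polynomial algorithm directly, and to upgrade to genuinely polynomial time I would invoke the Patel--Regts framework: the polynomial $Z_G$ is of bounded-induced-degree / is a graph polynomial whose low-order Taylor coefficients at $0$ can be computed in polynomial time by enumerating small connected subgraphs in a bounded-degree graph (there are only $n \cdot \Delta^{O(k)}$ connected subgraphs on $k$ vertices). Then the standard Taylor-polynomial-interpolation-plus-complex-logarithm argument yields, for $k = O(\log(n/\varepsilon))$, a multiplicative $(1\pm\varepsilon)$-approximation of $Z_G(\lambda_0) = \mathrm{vol}(P_{G,\delta})$ in time polynomial in $n/\varepsilon$. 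The remaining bookkeeping is: (i) check $P_{G,\delta}$'s volume is exactly this evaluation — a direct computation from the decomposition above; (ii) confirm the coefficient-extraction fits the Patel--Regts "bounded-degree graph polynomial" template; (iii) track that $\delta \le C/\Delta$ lands $\lambda_0$ strictly inside the zero-free disk with room to spare, so the Barvinok error estimates apply. I would present (i) and (iii) as short lemmas and spend the bulk of the proof on the zero-freeness statement.
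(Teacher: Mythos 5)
Your high-level plan (graph polynomial, zero-freeness, Patel--Regts coefficient computation) matches the paper's strategy, but the specific decomposition you choose creates a gap exactly at the heart of the argument, the zero-freeness at scale $\delta=O(1/\Delta)$. Splitting vertices into ``low'' ($x_v\le 1/2$) and ``high'' ($x_v>1/2$) and integrating out the low vertices yields a polymer model whose polymers are independent sets of high vertices that are connected \emph{in $G^2$}: two high vertices interact whenever they share a low neighbour, i.e.\ at distance $2$, through the factor $1-\max_u x_u$. A polymer on $m$ high vertices has weight of order $(2\delta)^m$ and \emph{no additional smallness}, while the incompatibility neighbourhood of a single vertex has size $\Theta(\Delta^2)$; consequently the Kotecký--Preiss (or Shearer/ratio-induction) condition you invoke requires $\delta\Delta^2=O(1)$, i.e.\ it reproduces the Gamarnik--Smedira regime $\delta=O(1/\Delta^2)$ rather than the claimed $O(1/\Delta)$. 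Your proposed remedy (``integrate out all purely-low vertices first'') is exactly what produces these distance-$2$ couplings, so it does not resolve the problem; to reach $O(1/\Delta)$ along your route one would have to additionally exploit that the deviation of a pair weight from the product of singleton weights is itself $O(\delta)$ and expand these weak couplings, an idea absent from your sketch. The paper avoids the issue with a different decomposition: an edge-based inclusion--exclusion $\mathbf{1}_{x_u+x_v\le 1}=1-\mathbf{1}_{x_u+x_v>1}$ followed by a Penrose (broken-edge) resummation, giving a forest generating function whose tree weights satisfy $|w_T|\le(4\delta)^{|V(T)|}$; there the connectivity cost per unit of $\delta$ is $\Delta$ (trees live in $G$, not $G^2$), and a vertex-removal induction with the rooted-tree generating function bound gives a zero-free disk of radius $K>1$ around the evaluation point $x=1$ whenever $\delta\le C/\Delta$.

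A second, more repairable, gap is the coefficient computation. In your model the coefficient of $\lambda^k$ is not just a count of small connected subgraphs: each cluster of $k$ high vertices contributes an exact integral of $\prod_v\bigl(1-\max_{u}x_u\bigr)$ over a box, i.e.\ the volume of a polytope cut out by order constraints, and Patel--Regts requires these to be computed in time $\exp(O(k\log\Delta))$ for $k=O(\log(n/\varepsilon))$. The naive approach of summing over the $k!$ orderings of the high coordinates costs $2^{\Theta(k\log k)}$, which is superpolynomial in $n$ at this value of $k$. The paper devotes Section~4 to precisely this point: it decomposes the domain into $\Delta^{O(k)}$ poset polytopes and evaluates each by a subset-indexed dynamic program (or, alternatively, one could invoke Lawrence's determinant formula). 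Your proposal treats this step as routine enumeration, so even granting zero-freeness it does not yet yield the polynomial running time claimed in the theorem.
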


While we can provide an explicit bound on the constant $C$ in the theorem above, but we choose not to since ideally we would like to be able to take $\delta$ to be $1/2$ and this appears to be out of reach for now. Indeed, our approach to proving Theorem~\ref{thm:main} consists of viewing the volume of $P_{G,\delta}$ as the evaluation of a certain graph polynomial/partition function and by applying Barvinok's interpolation method~\cites{barbook, PR17} to it. 
For this we need to show the polynomial is zero-free on some open region of the complex plane and and we manage to do this for $\delta=O(1/\Delta)$. 
It appears to be unlikely that this can be pushed beyond $O(1/\Delta)$; we will say more about this in Section~\ref{sec:remarks}.

The remainder of this paper is organized as follows.
In the next section we will indicate how to interpret the volume of $P_{G,\delta}$ as the evaluation of a polynomial that is naturally associated to $G$ and indicate how this leads to a proof of Theorem~\ref{thm:main} provided two conditions are met.
In Sections~\ref{sec:zero free} and~\ref{sec:coefficients} we verify these conditions.
Finally, in Section~\ref{sec:remarks} we conclude with some discussion and possible extensions.

\section{The volume as an evaluation of a graph polynomial}\label{sec:forest gen}
In this section we will show how to view $\text{Vol}(P_{G,\delta})$ as the evaluation of a certain forest generating function and explain how this allows us to employ Barvinok's interpolation method to prove Theorem~\ref{thm:main}.
To this end we introduce some terminology.

For a total order on the edges of a graph $H=(V,E)$ and a spanning tree $(V,T)$, $T\subset E$, we call $e\in E\setminus T$ \emph{broken} if $T\cup \{e\}$ contains a cycle in which $e$ is the largest edge.
We denote by $E_T$ the collection of broken edges.  A well known combinatorial lemma, going back to Penrose~\cite{penrose1967convergence}, states the following:
\begin{lemma}\label{lem:penrose}
Let $H=(V,E)$ be a graph with a fixed total ordering of the edges.
Then the collection of connected subgraphs of $H$ can be partitioned into intervals of the form $\{F\mid T\subseteq F \subseteq T\cup E_T\}$, where $T$ is a tree.
\end{lemma}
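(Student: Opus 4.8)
The plan is to exhibit an explicit map $F\mapsto T(F)$ sending each connected subgraph of $H$ — which I identify with its edge set $F\subseteq E$, so that $(V,F)$ is connected — to a spanning tree contained in it, and to show that $F$ lies in the interval attached to $T(F)$ and in no other. Fix the total order $e_1<e_2<\cdots<e_m$ on $E$. For a connected subgraph $F$ define
\[
T(F):=\{e\in F\mid e\text{ is not the largest edge of any cycle contained in }F\};
\]
equivalently, $T(F)$ is the output of the greedy (Kruskal-type) procedure that scans $e_1,\dots,e_m$ and adds $e_i$ precisely when $e_i\in F$ and $e_i$ does not close a cycle with the edges already taken. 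I will prove three facts: (i) $T(F)$ is a spanning tree; (ii) $T(F)\subseteq F\subseteq T(F)\cup E_{T(F)}$; and (iii) if $T$ is any spanning tree with $T\subseteq F\subseteq T\cup E_T$, then $T=T(F)$. Granting these, the sets $\{F\mid T\subseteq F\subseteq T\cup E_T\}$ over all spanning trees $T$ do partition the connected subgraphs of $H$: by (i) and (ii) every $F$ lies in the block indexed by $T(F)$, by (iii) that block is unique, and every spanning tree $T$ is itself the $T(F)$ of $F=T$ (a tree has no cycles, so $T(T)=T$), so no block is empty.

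The technical core is the following claim, proved by induction on $i$: for every $i\in\{0,1,\dots,m\}$ the graphs $(V,\,F\cap\{e_1,\dots,e_i\})$ and $(V,\,T(F)\cap\{e_1,\dots,e_i\})$ induce the same partition of $V$ into connected components. The step from $i-1$ to $i$ rests on the observation that, when $e_i=uv\in F$, one has $e_i\in T(F)$ if and only if $u$ and $v$ lie in different components of $(V,\,F\cap\{e_1,\dots,e_{i-1}\})$ — because $e_i\notin T(F)$ means exactly that some cycle of $F$ has $e_i$ as its largest edge, i.e.\ that a $u$--$v$ path using only edges smaller than $e_i$ exists in $F$. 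One then checks the three cases ($e_i\notin F$; $e_i\in F$ with endpoints already joined; $e_i\in F$ with endpoints not yet joined) and sees that the two partitions stay equal. Taking $i=m$ and using that $(V,F)$ is connected shows $(V,T(F))$ is connected; and $T(F)$ is acyclic, since the largest edge of a cycle inside $T(F)\subseteq F$ would contradict the definition of $T(F)$. This gives (i).

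Properties (ii) and (iii) then fall out. For (ii), $T(F)\subseteq F$ is immediate; if $e=uv\in F\setminus T(F)$, then $e$ is the largest edge of a cycle of $F$, so $u$ and $v$ are joined in $F$ by edges all smaller than $e$, hence by the claim they are joined in $T(F)$, necessarily by the unique $u$--$v$ path of the tree $T(F)$, whose edges are therefore all smaller than $e$; thus the unique cycle in $T(F)\cup\{e\}$ has $e$ as its largest edge, i.e.\ $e\in E_{T(F)}$. For (iii), suppose $T\subseteq F\subseteq T\cup E_T$ with $T$ a spanning tree; every $e\in F\setminus T$ lies in $E_T$, so $e$ is the largest edge of the cycle of $T\cup\{e\}\subseteq F$ and hence $e\notin T(F)$; therefore $T(F)\subseteq T$, and as both are spanning trees on $V$ they have the same number of edges, so $T(F)=T$. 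The only place where real care is needed is the inductive claim matching the component partitions — in particular pinning down the equivalence ``$e_i\in T(F)\iff$ endpoints of $e_i$ not yet joined within $F$'' — after which everything else is bookkeeping. (An alternative to introducing $T(F)$ explicitly is to argue that repeatedly deleting the largest edge of a cycle from $F$ until none remains always terminates at the same tree regardless of the order of deletions, via an exchange argument; but the direct definition above seems cleanest.)
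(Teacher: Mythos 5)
Your proof is correct and takes essentially the same route as the paper: your greedy (Kruskal-type) tree $T(F)$ is exactly the minimum-weight spanning tree of $(V,F)$ with respect to the edge ordering, which is precisely the association the paper's one-line proof sketch uses. The component-matching induction and the uniqueness step (iii) simply supply the details the paper leaves to the reader.
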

\begin{remark}
A simple proof of Lemma~\ref{lem:penrose} can be obtained by associating a connected subgraph $(V,F)$ of $H$ to its minimum weight spanning tree.    
\end{remark}

We start by rewriting the volume of $P_{G,\delta}$. % so as to relate it to the forest generating function of $G$.
We have
\begin{equation}\label{eq:vol with indicator}
\text{Vol}(P_{G,\delta})=\int \prod_{uv\in E} \mathbf{1}_{x_u+x_v\leq 1} d\mu,
\end{equation}
where $\mu$ denotes the Lebesgue measure, where we integrate over $[0,1/2+\delta]^V$ and where $\mathbf{1}_{x_u+x_v\leq 1}$ denotes the function that on input of $\bm{x}\in \mathbb{R}^V$ outputs $1$ if $x_u+x_v\leq 1$ and zero otherwise.
In what follows it will be convenient to denote $I=I_\delta:=[0,1/2+\delta]$.
Note that we can write
\[
\mathbf{1}_{x_u+x_v\leq 1}=1-\mathbf{1}_{x_u+x_v>1}
\]
and hence
\begin{equation}\label{eq:expand 1}
\prod_{uv\in E} \mathbf{1}_{x_u+x_v\leq 1}=\sum_{A\subseteq E}(-1)^{|A|}\prod_{uv\in A}\mathbf{1}_{x_u+x_v> 1}.
\end{equation}
We can expand this further by identifying the connected components of $A$. To this end denote for $k\in \mathbb{Z}_{\geq 0}$, by $\pi_k(V)$ the collection of partitions of $V$ into $k$ nonempty sets.
%We will abuse notation and write for $P\in \pi_k(V)$, $P=\{S_1,\ldots,S_t\}$ where the $S_i$ are the elements of $P$ of size at least $2$.
%; for $m\in \mathbb{N}$ we denote by $\pi(V)_m$ the collection of partitions of $V$ into $m$ nonempty sets.
We have that~\eqref{eq:vol with indicator} is equal to
\begin{align}\nonumber
&\int_{I^V}\sum_{k\geq 0}\sum_{P\in \pi_k(V)} \prod_{\substack{S\in P\\ |S|\geq 2}} \sum_{\substack{A\subseteq E(S)\\ (S,A) \text{ connected}}} (-1)^{|A|}\prod_{uv\in A}\mathbf{1}_{x_u+x_v> 1}d\mu
\\
=&\sum_{k\geq 0}\sum_{P\in \pi_k(V)} \prod_{\substack{S\in P\\ |S|\geq 2}} \int_{I^S}\sum_{\substack{A\subseteq E(S)\\ (S,A) \text{ connected}}} (-1)^{|A|}\prod_{uv\in A}\mathbf{1}_{x_u+x_v> 1}d\mu \cdot \prod_{\substack{P\in \pi_k(V)\\ |P|=1}} \int_I 1 d\mu.\label{eq:expand partition}
\end{align}
Now focusing on the contribution of a single set $S\in P$ of size at least $2$, denoting $E(S)$ for the edge set of $G[S]$, and fixing some aribtrary ordering of edges of $G$, we have
\begin{align}
&\sum_{\substack{A\subseteq E(S)\\ (S,A) \text{ connected}}} (-1)^{|A|}\prod_{uv\in A}\mathbf{1}_{x_u+x_v> 1}\nonumber
\\
=&\sum_{\substack{T\subseteq E(S)\\ (S,T) \text{ tree}}}\sum_{A\subseteq E(S)_T } (-1)^{|T\cup A|}\prod_{uv\in T\cup A}\mathbf{1}_{x_u+x_v> 1} \nonumber
\\
=&\sum_{\substack{T\subseteq E(S)\\ (S,T) \text{ tree}}}(-1)^{|T|}\prod_{uv\in T}\mathbf{1}_{x_u+x_v>1}\cdot \prod_{st\in E(S)_T}\mathbf{1}_{x_{s}+x_{t}\leq 1},\label{eq:tree}
\end{align}
where $E(S)_T\subseteq E(S)$ denotes the set of broken edges associated to $T$.

Define for an induced subgraph $H=(S,E(S))$ of $G$,
\[
w(H):=\frac{1}{(1/2+\delta)^{|S|}}\sum_{\substack{T\subseteq E(S)\\ (S,T) \text{ tree}}}(-1)^{|T|}\int_{I^S}\prod_{uv\in T}\mathbf{1}_{x_u+x_v>1}\cdot \prod_{st\in E(S)_T}\mathbf{1}_{x_{s}+x_{t}\leq 1} d\mu.
\]
We note that since the first line of~\eqref{eq:tree} does not depend on the ordering of $E$ this quantity really only depends on the graph $H$. 
We moreover note that in case $H$ is not connected we have $w(H)=0$ and if $H$ consists of a single vertex we have $w(H)=1$.

Let us next define the univariate polynomial $p_{G,\delta}(x)$, for an $n$-vertex graph $G$, by
\begin{equation}\label{eq:def p}
p_{G,\delta}(x):=\sum_{k\geq 0}\sum_{P\in \pi_{n-k}(V)}\prod_{S\in P} w(G[S]) x^k,
%\sum_{\substack{F\subseteq E\\ F \text{ forest}}}x^{|F|}\prod_{T \text{comp. of }F} w_T.
\end{equation}
%where for $k\ge 0$ the set $\pi_{k}(V)$ is the set of  partitions of $V$ into $k$ parts and $w(K_1)=1$

Using that $\int_{I}1 d\mu=1/2+\delta$, we then arrive at the following conclusion.
\begin{proposition}\label{prop: vol= forest gen}
With notation as above we have
\begin{equation}\label{eq:vol = forest gen}
 (1/2+\delta)^{-|V|}\text{Vol}(P_{G,\delta})=p_{G,\delta}(1). 
\end{equation}
\end{proposition}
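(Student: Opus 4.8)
The plan is to chase the chain of equalities that has already been set up in the excerpt and verify that nothing has been lost along the way. Concretely, I would start from~\eqref{eq:vol with indicator}, substitute the expansion~\eqref{eq:expand 1} of the product of indicator functions over all edges, and then reorganize the resulting sum over $A\subseteq E$ by grouping edges according to the connected components they induce on $V$; this is exactly the passage to~\eqref{eq:expand partition}, where a partition $P\in\pi_k(V)$ records the vertex sets of the components (singletons included). The only subtlety at this step is that a subset $A\subseteq E$ of edges determines a partition of $V$ into the vertex sets of its connected components, and conversely summing over partitions $P$ and then over connected edge sets $A\subseteq E(S)$ within each block $S$ with $|S|\ge 2$ recovers every $A$ exactly once; I would spell this bijection out explicitly.

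Next I would justify interchanging the integral over $I^V$ with the (finite) sums over $k$ and over $P\in\pi_k(V)$, which is immediate since everything is a finite sum of bounded measurable functions, and then factor the integral over $I^V$ as a product of integrals over $I^S$, one for each block $S\in P$, using that the integrand $\prod_{uv\in A}\mathbf 1_{x_u+x_v>1}$ splits as a product over the blocks because each edge $uv\in A$ lies inside a single block. For the singleton blocks this contributes a factor $\int_I 1\,d\mu = 1/2+\delta$ each, giving the term $\prod_{|P|=1}\int_I 1\,d\mu$ in~\eqref{eq:expand partition}. Then, for a fixed block $S$ with $|S|\ge 2$, I would invoke Lemma~\ref{lem:penrose} (after fixing an arbitrary total order on $E$) to rewrite the sum over connected $A\subseteq E(S)$ as a sum over spanning trees $T$ of $(S,E(S))$ with the broken edges $E(S)_T$ ranging freely, which is the content of~\eqref{eq:tree}: the alternating sum over broken-edge subsets turns each $\prod_{st}\mathbf 1_{x_s+x_t>1}$ into $\prod_{st\in E(S)_T}(1-\mathbf 1_{x_s+x_t>1})=\prod_{st\in E(S)_T}\mathbf 1_{x_s+x_t\le 1}$.

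Dividing and multiplying by $(1/2+\delta)^{|S|}$ for each block of size at least $2$ converts the integral over $I^S$ in~\eqref{eq:tree} into $w(G[S])$, and collecting all the factors $(1/2+\delta)$ — one for each singleton block and $(1/2+\delta)^{|S|}$ for each larger block, hence $(1/2+\delta)^{|V|}$ in total — I arrive at
\[
\text{Vol}(P_{G,\delta})=(1/2+\delta)^{|V|}\sum_{k\ge 0}\sum_{P\in\pi_{n-k}(V)}\prod_{S\in P}w(G[S]),
\]
after re-indexing the partitions so that a partition into $n-k$ parts corresponds to the exponent $k$ in~\eqref{eq:def p} (a partition of an $n$-set into $n-k$ blocks, reading off the number of singletons as $k$ in the degenerate all-singletons case, matches the bookkeeping in the definition of $p_{G,\delta}$). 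Evaluating the polynomial $p_{G,\delta}$ at $x=1$ kills the distinction between the $x^k$ terms and leaves precisely that double sum, which gives~\eqref{eq:vol = forest gen}. The main thing to be careful about — more a matter of bookkeeping than of genuine difficulty — is making sure the power of $1/2+\delta$ extracted from the singleton blocks and from the normalizing factors in $w(G[S])$ combines to exactly $(1/2+\delta)^{|V|}$ and that the re-indexing $P\in\pi_{n-k}(V)\leftrightarrow x^k$ is consistent with~\eqref{eq:def p}; once the component/partition bijection is nailed down, the rest is a direct substitution.
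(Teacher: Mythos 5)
Your proposal is correct and follows essentially the same route as the paper: the paper's own proof of this proposition is precisely the chain \eqref{eq:vol with indicator} $\to$ \eqref{eq:expand 1} $\to$ \eqref{eq:expand partition} $\to$ \eqref{eq:tree}, followed by normalizing each block by $(1/2+\delta)^{|S|}$ and each singleton by $\int_I 1\,d\mu=1/2+\delta$, exactly as you do. One small bookkeeping slip: in the indexing of \eqref{eq:def p} the exponent $k$ is $n$ minus the number of blocks (i.e.\ the number of forest edges), not the number of singletons, but since you evaluate at $x=1$ this has no effect on the identity.
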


The upshot of this proposition is that approximating the volume of $P_{G,\delta}$ is equivalent to approximating the polynomial $p_{G,\delta}(x)$ evaluated at $1$.
We will do this using Barvinoks's interpolation method~\cite{barbook} combined with the refinement due to Patel and the second author~\cite{PR17}.
For this it is convenient to express the coefficients of $p_{G,\delta}$ as induced subgraph counts.
More precisely, for a graph $H$, $\text{ind}(H,G)$ denotes the number of subsets $S$ of $V(G)$ such that $G[S]$ is isomorphic to $H$.  
We note that for $k\in \mathbb{Z}_{\geq 0}$ the coefficient of $x^k$ in $P_G$ is equal to 
\begin{equation}\label{eq:coef x^k}
    \sum_{\substack{H\in \mathcal{G}^*\\ |V(H)|=k+k(H)}} \text{ind}(H,G) \prod_{C \text{ comp. of } H} w(C), 
    %\underbrace{\sum_{\substack{F\subseteq E(H)\\ F \text { sp. forest, } |F|=k}}\prod_{T \textrm{ comp. of} F} w_T}_{\lambda_{H,k}},
\end{equation}
where $k(H)$ denotes the number of components of $H$ and where $ \mathcal{G}^*$ denotes the family of graphs in which each component has at least two vertices.
This implies that we can write $p_{G,\delta}(x)=\sum_{k\geq 0}e_k x^k$, where $e_k$ is of the form $e_k=e_k(G)=\sum_{H}\lambda_{H,k}\text{ind}(H,G)$ for certain numbers $\lambda_{H,k}$ that only depend on $H$ and $k$ and not on $G$.
We refer to $\lambda_{H,k}$ as the coefficient of $\text{ind}(H,\cdot)$ in $p_{G,\delta}$.

%where in the inner sum we sum only over partitions $P$ of size $k$ whose parts all have size at least $2$.

To be able to apply the interpolation method we then need the following ingredients. Here $\mathcal{G}_\Delta$ denotes the family of all graphs of maximum degree at most $\Delta$.
\begin{itemize}
    \item A region $U$ in the complex plane that contains $0$ and $1$ such that for all $z\in U$ and all $G\in \mathcal{G}_\Delta$, $p_{G,\delta}(z)\neq 0$, and,
    \item an algorithm that on input of an $m$-vertex graph $H=(V,E)\in \mathcal{G}_\Delta$ computes the coefficient $\lambda_{H,k}$ of $\text{ind}(H,\cdot)$ in $p_{G,\delta}$ in time $C^{O(m)}$ for a constant $C>0$.
\end{itemize}
In the next section we will prove the following result, taking care of the first ingredient.
\begin{theorem}\label{thm: zero-free disk}
Let $\Delta>0$. Then there exist constants $C\in (0,1)$ and $K>1$ such that if $\delta<C/\Delta$ and $|x|<K$, then $p_{G,\delta}(x)\neq 0$ for all graphs $G$ of maximum degree at most $\Delta$.
\end{theorem}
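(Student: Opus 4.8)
\emph{Proof proposal.} The plan is to recognise $p_{G,\delta}$ as the partition function of an abstract polymer model and then to verify the Koteck\'y--Preiss condition, which yields zero-freeness on a disk. First I would rewrite \eqref{eq:def p}: if $P\in\pi_{n-k}(V)$ then $\sum_{S\in P}(|S|-1)=n-|P|=k$, so $x^{k}=\prod_{S\in P}x^{|S|-1}$, and since $w(G[S])=0$ unless $G[S]$ is connected while $w(G[S])=1$ when $|S|=1$, one gets
\[
p_{G,\delta}(x)=\sum_{\{S_{1},\dots,S_{m}\}}\ \prod_{i=1}^{m}w(G[S_{i}])\,x^{|S_{i}|-1},
\]
the sum being over all finite families of pairwise disjoint $S_{i}\subseteq V$ with $|S_{i}|\ge 2$ and $G[S_{i}]$ connected (the vertices outside $\bigcup_{i}S_{i}$ are exactly the singleton blocks). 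This exhibits $p_{G,\delta}$ as the polymer partition function in which a polymer is a connected induced subgraph on at least two vertices, two polymers are compatible precisely when vertex-disjoint, and a polymer $S$ carries weight $\lambda_{S}:=w(G[S])\,x^{|S|-1}$. It then suffices to bound $|\lambda_{S}|$ well enough to invoke a standard cluster-expansion criterion.

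\emph{Bounding the weights.} For connected $S$ with $|S|=s\ge 2$, in the integral defining $w(G[S])$ I would bound every broken-edge indicator $\mathbf{1}_{x_{s}+x_{t}\le 1}$ by $1$ and use the triangle inequality over the spanning trees $T$ of $G[S]$. The crucial geometric observation is that if $x_{u}+x_{v}>1$ for a tree edge $uv$ with $x_{u},x_{v}\in I=[0,1/2+\delta]$, then $x_{u}>1-x_{v}\ge 1/2-\delta$ and likewise $x_{v}>1/2-\delta$; since a spanning tree touches every vertex, each coordinate must lie in the length-$2\delta$ interval $(1/2-\delta,1/2+\delta]$. Hence $\int_{I^{S}}\prod_{uv\in T}\mathbf{1}_{x_{u}+x_{v}>1}\,d\mu\le(2\delta)^{s}$, and so, using $1/2+\delta\ge 1/2$,
\[
|w(G[S])|\ \le\ (4\delta)^{s}\,\tau(G[S]),
\]
where $\tau(G[S])$ is the number of spanning trees of $G[S]$. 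Finally, a pair (connected induced subgraph on $s$ vertices, spanning tree of it) is the same thing as a subtree of $G$ on $s$ vertices, so for a fixed vertex $v$,
\[
\sum_{\substack{S\ni v,\ |S|=s\\ G[S]\text{ connected}}}\tau(G[S])\ \le\ (e\Delta)^{s},
\]
by the standard bound on the number of subtrees of a given size through a vertex in a graph of maximum degree $\Delta$.

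\emph{Koteck\'y--Preiss.} With the choice $a(S):=|S|$, the Koteck\'y--Preiss condition asks that $\sum_{S:\,S\cap\gamma\neq\emptyset}|\lambda_{S}|e^{2|S|}\le|\gamma|$ for every polymer $\gamma$; since an incompatible polymer shares a vertex with $\gamma$, this follows once $\sum_{S\ni v}|\lambda_{S}|e^{2|S|}\le 1$ for every $v$. Combining the two displays above with $|x|<K$ (so $|x|^{s-1}\le K^{s}$ since $K>1$) gives
\[
\sum_{S\ni v}|\lambda_{S}|e^{2|S|}\ \le\ \sum_{s\ge 2}\bigl(e\Delta\bigr)^{s}(4\delta)^{s}K^{s}e^{2s}\ =\ \sum_{s\ge 2}\bigl(4e^{3}\Delta K\delta\bigr)^{s},
\]
which is at most $1$ as soon as $4e^{3}\Delta K\delta\le 1/2$. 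Thus, fixing any $K\in(1,2]$ and setting $C:=1/(8e^{3}K)\in(0,1)$, for $\delta<C/\Delta$ the Koteck\'y--Preiss condition holds uniformly over $G\in\mathcal G_{\Delta}$, so the cluster expansion of $\log p_{G,\delta}(x)$ converges absolutely and in particular $p_{G,\delta}(x)\neq 0$ whenever $|x|<K$; this disk contains $0$ and $1$.

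\emph{Expected main obstacle.} The only genuinely new estimate is the weight bound $|w(G[S])|\le(4\delta)^{s}\tau(G[S])$: the point is to notice that the broken-edge indicators may simply be discarded and that the tree-edge indicators already confine all coordinates to a window of width $2\delta$ about $1/2$, which produces the factor $\delta^{s}$ that defeats the $(e\Delta)^{s}$ growth of the number of polymers. Everything after that is a routine application of cluster-expansion theory; alternatively one could replace the Koteck\'y--Preiss step by a direct induction on $|V(G)|$ controlling a ratio of the form $p_{G,\delta}/p_{G-v,\delta}$, but the polymer formulation seems cleanest.
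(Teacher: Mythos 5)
Your proposal is correct, and it takes a genuinely different route from the paper --- in fact it is exactly the route the paper alludes to when it remarks that ``for experts in the use of statistical physics methods such as the Koteck\'y--Preiss conditions \dots this should not be a difficult exercise'' before opting for a self-contained argument. The shared core is the geometric weight estimate: both you and the paper observe that the tree-edge indicators $\mathbf{1}_{x_u+x_v>1}$ confine every coordinate to a window of width $2\delta$ around $1/2$, which is the paper's Lemma~\ref{lem:weight bound} giving $|w_T|\le (4\delta)^{|V(T)|}$ for a single tree. The difference is in how this is deployed. The paper first rewrites $p_{G,\delta}$ as a forest generating function (via the Penrose/broken-edge identity, so each forest component carries an individual tree weight $w_T$) and then proves zero-freeness by a self-contained induction on vertex deletion, controlling $|p_{G,\delta}(x)/p_{G-v,\delta}(x)-1|\le 1/\Delta$ with the rooted-tree generating function bound $T_{G,v}(\log a/(a\Delta))\le a$ of Lemma~\ref{lem:tree bound}. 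You instead stay with the partition formulation \eqref{eq:def p}, read it as a subgraph polymer model with weights $w(G[S])x^{|S|-1}$, bound $|w(G[S])|\le (4\delta)^{|S|}\tau(G[S])$ by the triangle inequality over spanning trees, absorb $\tau$ into a subtree count via the bijection between (connected induced subgraph, spanning tree) pairs and subtrees of $G$, and invoke Koteck\'y--Preiss. Your reduction of \eqref{eq:def p} to the polymer partition function ($x^k=\prod_S x^{|S|-1}$ since $k=n-|P|$) is correct, as is the single-vertex sufficient condition for KP. What each approach buys: yours is shorter modulo citing the cluster-expansion theorem (which does apply to complex weights, as needed here), while the paper's induction is elementary, self-contained, and yields somewhat better explicit constants (your $C=1/(8e^3K)$ versus roughly $\log(a)(1-1/\Delta)/(4aK\Delta)$ with $a$ free in $(1,e)$); both give the same qualitative $\delta=O(1/\Delta)$ regime, which is all the statement requires.
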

For experts in the use of statistical physics methods such as the Koteck\'y-Preiss conditions~\cite{KP86}, the Dobrushin conditions~\cite{dobrushin1996estimates} or Gruber-Kunz conditions~\cites{GruberKunz,BFPgruberkunzbound}, this should not be a difficult exercise. 
For completeness we will however provide a detailed and self-contained proof below inspired by~\cite{JPRchromatic}.

In the subsequent section we will prove the following result, addressing the second ingredient.
\begin{theorem}\label{thm:algorithm}
Let $\Delta>0$. There exists an algorithm that on input of a $m$-vertex graph $G=(V,E)\in \mathcal{G}_\Delta$ computes the coefficient $\lambda_{H,k}$ of $\text{ind}(H,\cdot)$ in $p_{G,\delta}$ in time $\Delta^{O(k)}$, where the implicit constants do not depend on $\Delta$.
\end{theorem}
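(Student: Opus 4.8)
The plan is first to make $\lambda_{H,k}$ explicit, and then to reduce its computation to a bounded number of exact polytope--volume computations in low dimension. Comparing~\eqref{eq:def p} with~\eqref{eq:coef x^k}, we may take $\lambda_{H,k}=\prod_{C}w(C)$, the product running over the connected components $C$ of $H$, whenever $H\in\mathcal G^{*}$ and $|V(H)|-k(H)=k$, and $\lambda_{H,k}=0$ otherwise; deciding which case occurs costs $\mathrm{poly}(|V(H)|)$. In the nonzero case every component of $H$ has at least two vertices, so $k(H)\le |V(H)|/2$ and hence $|V(H)|\le 2k$. It therefore suffices to compute $w(C)$, for a connected graph $C$ on $s\le 2k$ vertices of maximum degree at most $\Delta$, in time $\Delta^{O(s)}$ and as an exact rational whose bit-length is polynomial in $s$ and in the bit-length of $\delta$; multiplying the at most $k$ resulting values then yields $\lambda_{H,k}$.

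Fix $S=V(C)$ and an arbitrary total order on $E(C)$ (recall that $w(C)$ does not depend on it). The definition of $w$ reads
\[
w(C)=\frac{1}{(1/2+\delta)^{s}}\sum_{T}(-1)^{|T|}I_T,\qquad I_T:=\int_{I^{S}}\ \prod_{uv\in T}\mathbf{1}_{x_u+x_v>1}\ \prod_{st\in E_T}\mathbf{1}_{x_s+x_t\le 1}\ d\mu ,
\]
where $T$ runs over the spanning trees of $C$ and $E_T\subseteq E(C)\setminus T$ is the set of broken edges associated with $T$. We enumerate the spanning trees with polynomial delay; since a spanning tree is determined by a choice, for each non-root vertex, of a parent among its at most $\Delta$ neighbours in $C$, there are at most $\Delta^{s-1}$ of them, and for each $T$ the set $E_T$ is found in $\mathrm{poly}(s)$ time by testing, for every non-tree edge, whether it is the largest edge on its fundamental cycle in $T$. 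Everything thus reduces to computing the individual integrals $I_T$. Now $I_T$ is the volume of the polytope cut out of the box $[0,1/2+\delta]^{S}$ by the inequalities $x_u+x_v\ge 1$ ($uv\in T$) and $x_s+x_t\le 1$ ($st\in E_T$). We may assume $\delta>0$ (otherwise $I_T=0$ since $s\ge 2$); as $(S,T)$ is a spanning tree and $s\ge 2$, every $v\in S$ lies on some tree edge $uv$, so on this polytope $x_v\ge 1-x_u\ge 1/2-\delta\ge 0$, making the lower box constraints $x_v\ge 0$ redundant. Translating by $-\tfrac12\mathbf{1}$ and rescaling by $1/\delta$ therefore gives
\[
I_T=\delta^{s}\,\mathrm{vol}(Q_T),\qquad Q_T:=\big\{\,z\in[-1,1]^{S}\ :\ z_u+z_v\ge 0\ \ (uv\in T),\ \ z_s+z_t\le 0\ \ (st\in E_T)\,\big\},
\]
a polytope in $\mathbb R^{S}$ with at most $2s+|E(C)|=O(\Delta s)$ facets, all of whose coefficients and right-hand sides lie in $\{0,\pm1\}$; in particular $Q_T$ is independent of $\delta$.

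Finally, $\mathrm{vol}(Q_T)$ must be computed exactly in time $\Delta^{O(s)}$. A generic infinitesimal perturbation of the right-hand sides of $Q_T$ makes it simple, and a simple polytope with $N=O(\Delta s)$ facets in $\mathbb R^{s}$ has at most $\binom{N}{s}=\Delta^{O(s)}$ vertices; applying a vertex-indexed exact volume formula — for instance Lawrence's signed-decomposition formula, with the perturbation parameter carried symbolically and set to $0$ at the end — then computes $\mathrm{vol}(Q_T)$ with $\Delta^{O(s)}$ arithmetic operations. The intermediate numbers stay small: each vertex is the unique solution of an $s\times s$ subsystem with $\{0,\pm1\}$ entries, hence has coordinates that are rationals with numerator and denominator bounded by $s^{s/2}$ by Hadamard's inequality, so that every volume, and hence $w(C)=\big(\tfrac{\delta}{1/2+\delta}\big)^{s}\sum_{T}(-1)^{|T|}\mathrm{vol}(Q_T)$, is a rational of bit-length polynomial in $s$ and in the bit-length of $\delta$. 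Putting this together, $w(C)$ is obtained in time $\Delta^{O(s)}=\Delta^{O(k)}$ (polynomial also in the bit-length of $\delta$), and multiplying over the components of $H$ completes the proof. The only step here that is more than bookkeeping is this exact volume evaluation: one must avoid the cost of triangulating (a triangulation of a polytope with $V$ vertices in $\mathbb R^{s}$ can use $V^{\Theta(s)}$ simplices, which here would be $\Delta^{\Theta(s^{2})}$ and thus superpolynomial in the eventual application of the theorem), which is why a vertex-indexed formula is used, and one must keep the arithmetic exact with polynomially many bits, which is precisely where the reduction to a polytope with $\{0,\pm1\}$ data enters; all the rest — reading $\lambda_{H,k}$ off~\eqref{eq:def p}, enumerating spanning trees, and recognising broken edges — is routine.
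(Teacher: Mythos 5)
Your proposal is correct, but for the key computational step it takes the route that the paper explicitly mentions and then deliberately avoids. You compute $\lambda_{H,k}=\prod_C w(C)$ component by component, enumerate the at most $\Delta^{s-1}$ spanning trees of each component, and reduce each integral $I_T$ to the volume of the $\delta$-independent polytope $Q_T$ with $\{0,\pm1\}$ data, which you evaluate by Lawrence's vertex-indexed determinant formula after a symbolic perturbation; this is exactly the alternative sketched in the remark preceding Proposition~\ref{prop:volume} (``the geometric algorithm of Lawrence could be used to compute $\hat w_T$ in time $\Delta^{O(k)}$''), whereas the paper instead enumerates $k$-edge forests of $H$ via~\eqref{eq:coef ind} and gives a self-contained combinatorial evaluation of $\hat w_T$: it splits the domain according to which coordinates lie below or above $1/2$ and which neighbour attains the binding bound, reducing to integrals of small polynomials over order polytopes of posets, computed by a dynamic program over the subsets $U\subseteq S$. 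The trade-off is that your route outsources the hardest part to a black box whose hypotheses (simplicity of the perturbed polytope, a deterministically chosen functional non-constant on edges, exact arithmetic with the perturbation parameter) you gesture at rather than verify, while the paper's scheme avoids perturbation and genericity issues entirely and is elementary; conversely your reduction to $Q_T$ is clean and makes the $\delta$-dependence trivial. One claim you make is not justified as stated: the assertion that $\mathrm{vol}(Q_T)$ has bit-length polynomial in $s$ does not follow from Hadamard's bound on individual vertices, since the common denominator of a sum of $\Delta^{O(s)}$ determinant ratios can a priori have bit-length $\Delta^{O(s)}$; fortunately this does not damage the theorem, because numbers of bit-length $\Delta^{O(s)}$ still keep the total running time at $\Delta^{O(k)}$, so the overall argument stands.
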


Combining these two ingredients with Theorem 3.2 from~\cite{PR17} we immediately obtain the desired deterministic algorithm to approximate the volume of the polytope $P_{G,\delta}$, thereby proving Theorem~\ref{thm:main}.

\section{A zero-free disk for $p_{G,\delta}$}\label{sec:zero free}
In this section we prove the following result implying Theorem~\ref{thm: zero-free disk}.
Here $e$ is the base of the natural logarithm.
\begin{theorem}\label{thm: zero-free  precise}
Let $\Delta>1$. Suppose $K>1$ and $\delta\in (0,1)$ satisfy
\begin{equation}\label{eq:condition K and delta}
4\delta K \leq \frac{\log(a)(1-1/\Delta)}{a\Delta}
\end{equation}
for some $a\in (1,e)$.
Then for any graph $G$ of maximum degree at most $\Delta$ and $x\in \mathbb{C}$ such that $|x|\leq K$, $p_{G,\delta}(x)\neq 0$.
\end{theorem}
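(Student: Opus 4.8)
\emph{Plan.}
The first step is to recognise $p_{G,\delta}$ as the partition function of an abstract polymer model. Expanding the definition~\eqref{eq:def p} and using that $w$ vanishes on disconnected graphs and equals $1$ on a single vertex, one rewrites
\[
p_{G,\delta}(x)=\sum_{\{\gamma_1,\dots,\gamma_m\}}\ \prod_{i=1}^m w(G[\gamma_i])\,x^{|\gamma_i|-1},
\]
the sum ranging over all finite (possibly empty) collections of pairwise \emph{disjoint} vertex subsets $\gamma_i\subseteq V$ with $|\gamma_i|\geq 2$ and $G[\gamma_i]$ connected. Calling such a subset a polymer with activity $z_\gamma(x):=w(G[\gamma])\,x^{|\gamma|-1}$ and declaring two polymers compatible exactly when disjoint, $p_{G,\delta}(x)$ is precisely the corresponding polymer partition function. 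Fixing a vertex $v$ and splitting according to which polymer (at most one) contains $v$, one obtains the deletion recursion
\[
p_{G,\delta}(x)=p_{G-v,\delta}(x)+\sum_{\gamma\ni v} z_\gamma(x)\,p_{G-\gamma,\delta}(x),
\]
where the sum is over polymers $\gamma$ of $G$ containing $v$ and $G-\gamma$ denotes $G$ with the vertices of $\gamma$ deleted.

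\emph{Bounding polymer weights.}
For a connected graph $H$ on $\ell$ vertices every spanning tree $T$ has $\ell-1$ edges, so all summands defining $w(H)$ carry the common sign $(-1)^{\ell-1}$; bounding the factors $\mathbf 1_{x_s+x_t\leq 1}$ by $1$ and observing that on the support of $\prod_{uv\in T}\mathbf 1_{x_u+x_v>1}$ every coordinate lies in $[\tfrac12-\delta,\tfrac12+\delta]$ (in $I_\delta$ the inequality $x_u+x_v>1$ forces $x_u>\tfrac12-\delta$, and every vertex of $H$ is incident with an edge of $T$), one gets
\[
|w(H)|\ \leq\ (\tfrac12+\delta)^{-\ell}\,\tau(H)\,(2\delta)^{\ell}\ \leq\ \tau(H)\,(4\delta)^{\ell},
\]
where $\tau(H)$ is the number of spanning trees of $H$. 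Since $\sum_{\gamma\ni v,\ |\gamma|=\ell}\tau(G[\gamma])$ equals the number of $\ell$-vertex subtrees of $G$ through $v$ — which, by lifting to the universal cover, is at most the number of such subtrees of the infinite tree in which the root has degree $\Delta$ and every other vertex degree at most $\Delta-1$ — this yields, for $|x|\leq K$, a bound of the form $\sum_{\gamma\ni v,\ |\gamma|=\ell}|z_\gamma(x)|\leq D\,(E\,\delta K)^{\ell-1}$ with $D$ and $E$ of order $\Delta$; the gap between root degree $\Delta$ and internal degree $\Delta-1$ is what produces the factor $1-1/\Delta$ in~\eqref{eq:condition K and delta}.

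\emph{The induction.}
I would then prove, by induction on the number of vertices and over all graphs of maximum degree at most $\Delta$ simultaneously, that for all $x$ with $|x|\leq K$ one has $p_{G,\delta}(x)\neq 0$ and, for every $v\in V(G)$,
\[
\left|\frac{p_{G-v,\delta}(x)}{p_{G,\delta}(x)}\right|\ \leq\ a .
\]
The empty graph is the trivial base case. For the inductive step $G-v$ has fewer vertices, so $p_{G-v,\delta}(x)\neq0$, and dividing the recursion by $p_{G-v,\delta}(x)$ gives
\[
\frac{p_{G,\delta}(x)}{p_{G-v,\delta}(x)}=1+\sum_{\gamma\ni v} z_\gamma(x)\,\frac{p_{G-\gamma,\delta}(x)}{p_{G-v,\delta}(x)} .
\]
Writing $G-\gamma=(G-v)-\{u_1,\dots,u_{|\gamma|-1}\}$ and telescoping, $p_{G-\gamma,\delta}/p_{G-v,\delta}$ is a product of $|\gamma|-1$ ratios of the form $p_{G'-u,\delta}/p_{G',\delta}$ with $G'$ having fewer vertices than $G$, so the inductive hypothesis gives $|p_{G-\gamma,\delta}(x)/p_{G-v,\delta}(x)|\leq a^{|\gamma|-1}$. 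Hence
\[
\left|\frac{p_{G,\delta}(x)}{p_{G-v,\delta}(x)}-1\right|\ \leq\ \sum_{\ell\geq 2}a^{\ell-1}\sum_{\substack{\gamma\ni v\\ |\gamma|=\ell}}|z_\gamma(x)| ,
\]
and the estimate above makes the right-hand side a convergent geometric-type series whose sum is at most $1-1/a$ precisely when~\eqref{eq:condition K and delta} holds for the chosen $a\in(1,e)$. Consequently $|p_{G,\delta}(x)/p_{G-v,\delta}(x)|\geq 1/a>0$, which simultaneously gives $p_{G,\delta}(x)\neq0$ and the required ratio bound, closing the induction. (Readers at ease with cluster expansions may instead feed the same two estimates into a Koteck\'y--Preiss type criterion for the polymer model, with $a$ playing the role of the exponential weight.)

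\emph{Main obstacle.}
The conceptual steps are routine once the polymer reformulation is in place; the real work is quantitative. The delicate points are obtaining a sufficiently clean upper bound on the number of $\ell$-vertex subtrees of a maximum-degree-$\Delta$ graph through a fixed vertex, and then shepherding every constant through the geometric series so that the convergence requirement with sum $\leq 1-1/a$ matches~\eqref{eq:condition K and delta} after optimising over $a\in(1,e)$. One must also be careful to run the induction over the whole class of maximum-degree-$\leq\Delta$ graphs at once, since the telescoping step invokes the ratio bound on many vertex-deleted subgraphs.
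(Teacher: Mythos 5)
Your overall architecture is the paper's: the forest/polymer rewriting of $p_{G,\delta}$, the bound $|w(G[\gamma])|\leq \tau(G[\gamma])(4\delta)^{|\gamma|}$ (the paper states this per spanning tree, Lemma~\ref{lem:weight bound}), the vertex-deletion recursion over polymers containing $v$, and the telescoping induction. The gap is exactly the quantitative step you deferred. With your induction hypothesis $|p_{G-v,\delta}(x)/p_{G,\delta}(x)|\leq a$, the telescoping penalty for a polymer $\gamma$ is $a^{|\gamma|-1}$, so you must bound $4\delta\sum_{\ell\geq 2}n_\ell(v)\,(4\delta K a)^{\ell-1}$ by $1-1/a$, where $n_\ell(v)$ is the number of $\ell$-vertex subtrees of $G$ through $v$. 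But $n_\ell(v)$ grows at rate roughly $e\Delta$ per added vertex (this rate is essentially attained on the $\Delta$-regular tree, so your universal-cover bound cannot do better), hence the ratio of consecutive terms is of order $4\delta K a\, e\Delta$, which under \eqref{eq:condition K and delta} is only guaranteed to be at most about $e\log(a)(1-1/\Delta)$. This exceeds $1$ as soon as $\log a>1/e$, i.e.\ for $a$ not close to $1$ — and in particular as $a\to e$, which is precisely the regime where \eqref{eq:condition K and delta} is weakest and the theorem strongest. There your series is not even bounded uniformly in $|V(G)|$, the claimed bound $\leq 1-1/a$ is false, and the induction does not close; at best you recover the statement for a restricted range of $a$, i.e.\ a strictly worse constant than asserted.

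The paper closes the induction by decoupling the two roles you gave to $a$. Its induction hypothesis is $\bigl|p_{G,\delta}(x)/p_{G-v,\delta}(x)-1\bigr|\leq 1/\Delta$, so the telescoping penalty per deleted vertex is only $(1-1/\Delta)^{-1}$ rather than $a$; and $a$ enters solely through the rooted-tree generating function bound $T_{G,v}\bigl(\tfrac{\log a}{a\Delta}\bigr)\leq a$ (Lemma~\ref{lem:tree bound}), which replaces your crude geometric estimate on $n_\ell(v)$. The per-edge factor then becomes $4\delta K/(1-1/\Delta)\leq \log a/(a\Delta)$, exactly the admissible argument of that lemma, giving $\bigl|p_{G,\delta}/p_{G-v,\delta}-1\bigr|\leq 4\delta(a-1)<1/\Delta$, since \eqref{eq:condition K and delta} forces $4\delta<1/(a\Delta)$. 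If you replace your ratio hypothesis $\leq a$ by the $1/\Delta$-form and your subtree count by Lemma~\ref{lem:tree bound}, your argument becomes the paper's proof; as written, the constants do not go through.
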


First of all it will be convenient to reformulate $p_{G,\delta}(x)$ as a certain forest generating function.
By first summing over the forests and noting that this induces a partition of the vertex set we see that~\eqref{eq:expand partition} is equal to
\[
\sum_{\substack{F\subseteq E\\ F \text{ forest}}} (-1)^{|F|}\prod_{T \text{ comp. of }F} \int_{I^{V(T)}}\prod_{uv\in T}\mathbf{1}_{x_u+x_v>1}\cdot \prod_{st\in E_T}\mathbf{1}_{x_{s}+x_{t}\leq 1}d\mu,
\]
where we insist that $E_T$ only depends on $T$ and the graph induced by $V(T)$.
We now introduce for a tree $T\subseteq E$ (not necessarily a spanning tree) its weight
\begin{equation}\label{eq:def weight}
    w_T:=(1+\delta)^{-|V(T)|}\int_{I^{V(T)}} (-1)^{|T|}\prod_{uv\in T}\mathbf{1}_{x_u+x_v>1}\cdot \prod_{st\in E_T}\mathbf{1}_{x_{s}+x_{t}\leq 1} d\mu,
\end{equation}
%where we integrate over $I^{V(T)}$ both in the numerator as in the denominator.
It then follows that 
\begin{equation}\label{eq:rewrite to forest}
p_{G,\delta}(x)=\sum_{\substack{F\subseteq E(G)\\F \text{ forest}}} x^{|F|}\prod_{T \text{ comp. of }F} w_T,
\end{equation}
an expression of $p_{G,\delta}$ as some kind of forest generating function.

Now we have the desired reformulation we require some preliminary results before we can prove Theorem~\ref{thm: zero-free  precise}.
Let for a graph $G$ and a vertex $v\in V(G)$, $T_{G,v}(x)$, denote the rooted tree generating function of $(G,v)$, i.e.,
\[
T_{G,v}(x)=\sum_{\substack{T\subseteq E(G)\\ v\in V(T)}}x^{|T|},
\]
where the sum is over trees $T$.
We require the following simple lemma, found as Lemma 4.6 in~\cite{PRsurvey}. 
\begin{lemma}\label{lem:tree bound}
Let $\Delta>0$ and let $G$ be a graph of maximum degree at most $\Delta$.
Then, for any $a>1$, \[T_{G,v}\left(\tfrac{\log a}{a\Delta}\right)\leq a.\]
\end{lemma}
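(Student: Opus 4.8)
The plan is to prove Lemma~\ref{lem:tree bound} by setting up a recursive inequality for the rooted tree generating function and then verifying that the claimed bound $a$ is a valid fixed point of the associated recursion. First I would root $G$ at $v$ and organize trees $T$ containing $v$ according to how they branch at the root: a tree containing $v$ either is the single vertex $v$, or it connects $v$ to some nonempty subset of its neighbours $u_1,\dots,u_j$ (with $j\le\deg(v)\le\Delta$), and for each such $u_i$ one recursively attaches a tree of the corresponding subtree rooted at $u_i$. This gives the standard branching identity
\[
T_{G,v}(x)\;\le\;\prod_{u\sim v}\bigl(1+x\,T_{G,u}(x)\bigr),
\]
valid for $x\ge 0$; the inequality (rather than equality) accounts for the fact that the $T_{G,u}$ generating functions may overcount when the underlying subtrees share vertices, but since all coefficients are nonnegative this only helps us, and in any case one may peel vertices off so that the recursion is over strictly smaller graphs.

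Next I would prove by induction on $|V(G)|$ that $T_{G,v}(x_0)\le a$ where $x_0=\tfrac{\log a}{a\Delta}$. The base case $|V(G)|=1$ is immediate since $T_{G,v}=1\le a$ as $a>1$. For the inductive step, using the branching inequality together with the induction hypothesis applied to each neighbour's subtree (which has fewer vertices, maximum degree still at most $\Delta$), we get
\[
T_{G,v}(x_0)\;\le\;\bigl(1+x_0 a\bigr)^{\deg(v)}\;\le\;\bigl(1+x_0 a\bigr)^{\Delta}
=\Bigl(1+\tfrac{\log a}{\Delta}\Bigr)^{\Delta}.
\]
It then remains to check the elementary numerical fact that $\bigl(1+\tfrac{\log a}{\Delta}\bigr)^{\Delta}\le a$ for all $\Delta>0$ and $a>1$, which follows from $\log a\ge 0$ and the inequality $1+t\le e^{t}$ applied with $t=\tfrac{\log a}{\Delta}$, giving $\bigl(1+\tfrac{\log a}{\Delta}\bigr)^{\Delta}\le e^{\log a}=a$.

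The only genuinely delicate point is making the branching recursion rigorous enough that the induction on the number of vertices actually decreases the graph size: one should fix a neighbour-ordering or use the deletion of $v$ and work with the components of $G-v$, so that each recursive call is on a graph with strictly fewer vertices while the maximum-degree bound $\Delta$ is preserved. Everything else is a one-line convexity/exponential estimate. Since this lemma is quoted verbatim from~\cite{PRsurvey} (Lemma 4.6 there), I would simply cite it and, for completeness, include the two-line argument above; there is no real obstacle beyond bookkeeping.
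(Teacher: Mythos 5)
Your argument is correct: the branching bound $T_{G,v}(x)\leq\prod_{u\sim v}\bigl(1+x\,T_{G-v,u}(x)\bigr)$ for $x\geq 0$, induction on $|V(G)|$, and the estimate $\bigl(1+\tfrac{\log a}{\Delta}\bigr)^{\Delta}\leq e^{\log a}=a$ is exactly the standard proof of this lemma, and you correctly flag (and repair) the only delicate point, namely that the recursion must be run on $G-v$ so that the induction genuinely decreases the vertex count while preserving the degree bound. The paper itself gives no proof and simply cites Lemma 4.6 of~\cite{PRsurvey}, whose argument is essentially the one you give, so there is nothing further to add.
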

The next lemma says we can bound the weights $w_T$.
\begin{lemma}\label{lem:weight bound}
Let $G=(V,E)$ be a graph and let $\delta\in (0,1/2)$.
For any tree $T\subseteq E$ with at least one edge we have
\[
|w_T|\leq \left(\frac{2\delta}{1/2+\delta}\right)^{|V(T)|}.
\]
\end{lemma}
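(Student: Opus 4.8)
The plan is to bound $|w_T|$ crudely, using only that the first family of indicator functions in~\eqref{eq:def weight} forces the integration variables into a small cube, together with the trivial fact that a product of $\{0,1\}$-valued functions is at most $1$ in absolute value; in particular no cancellation among the signs or among the broken-edge indicators will be exploited.

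First I would record that, straight from~\eqref{eq:def weight},
\[
|w_T|\ \leq\ (1/2+\delta)^{-|V(T)|}\int_{I^{V(T)}}\prod_{uv\in T}\mathbf{1}_{x_u+x_v>1}\cdot\prod_{st\in E_T}\mathbf{1}_{x_s+x_t\leq 1}\,d\mu\ \leq\ (1/2+\delta)^{-|V(T)|}\,\mu(\Omega_T),
\]
where $\Omega_T$ denotes the set of $\bm{x}\in I^{V(T)}$ on which the integrand is nonzero; here I used that the integrand is a product of indicator functions, hence identically $1$ on $\Omega_T$ and $0$ off it.

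The key step is the inclusion $\Omega_T\subseteq (1/2-\delta,\,1/2+\delta]^{V(T)}$, which immediately gives $\mu(\Omega_T)\leq (2\delta)^{|V(T)|}$. Since $T$ is a tree with at least one edge it is connected with at least two vertices, so every vertex $u\in V(T)$ is an endpoint of some edge $uv\in T$. On $\Omega_T$ we have $x_u+x_v>1$, while $x_v\leq 1/2+\delta$ because $\bm{x}\in I^{V(T)}$; hence $x_u>1-(1/2+\delta)=1/2-\delta$. As also $x_u\leq 1/2+\delta$, this places $x_u$ in the interval $(1/2-\delta,\,1/2+\delta]$ of length $2\delta$ for every $u\in V(T)$, establishing the inclusion. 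Combining the two displays yields
\[
|w_T|\ \leq\ (1/2+\delta)^{-|V(T)|}(2\delta)^{|V(T)|}\ =\ \left(\frac{2\delta}{1/2+\delta}\right)^{|V(T)|},
\]
which is the claimed bound.

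There is no real obstacle in this argument; the only point needing a moment's attention is the assertion that every vertex of $V(T)$ lies on an edge of $T$, which is precisely where the hypothesis that $T$ has at least one edge (equivalently, that $T$ is a connected graph on at least two vertices) enters. Note that the broken-edge indicators $\mathbf{1}_{x_s+x_t\leq 1}$ are needed only to guarantee that the integrand is $\{0,1\}$-valued and play no further role, so the estimate relies on no cancellation and is presumably far from tight.
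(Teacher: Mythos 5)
Your proof is correct and is essentially the paper's own argument: discard the broken-edge indicators, observe that the tree constraints $x_u+x_v>1$ together with $x_v\le 1/2+\delta$ confine the support of the integrand to a box of side length $2\delta$, and divide by the normalization $(1/2+\delta)^{|V(T)|}$. The only difference is that you spell out explicitly the containment in $(1/2-\delta,1/2+\delta]^{V(T)}$, which the paper states without detail.
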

\begin{proof}
Let us denote the number of vertices of $T$ by $k\geq 2$.
The denominator in the definition of $w_T$ (as found in~\eqref{eq:def weight}) is equal to $(1/2+\delta)^{k}$.
So it suffices to bound the numerator.
The numerator can simply be bounded by 
\[
\int_{[0,1/2+\delta]^{V(T)}} \prod_{uv\in T}{\mathbf 1}_{x_u+x_v>1} d\mu,
\]
which in turn can be bounded by $(2\delta)^k$ as it is equal to the volume of a polytope contained in the box $[1/2-\delta,1/2+\delta]^{V(T)}$.
This proves the lemma.
\end{proof}

\begin{proof}[Proof of Theorem~\ref{thm: zero-free  precise}]
We will prove the theorem by proving inductively on the number of vertices of $G$ the following two statements:
\begin{itemize}
    \item $p_{G,\delta}(x)\neq 0$,
    \item for any vertex $v$ of $G$, $\left|\frac{p_{G,\delta}(x)}{p_{G-v,\delta}(x)}-1\right|\leq 1/\Delta$.
\end{itemize}
In case that $G$ consist of a single vertex the two statements are clearly true.
Next assume that $|V(G)|>1$.
Since $1/\Delta\in (0,1)$, the second item implies the first by induction. 
So we focus on proving the second item.

We have for any vertex $v$,
\[
p_{G,\delta}(x)=p_{G-v,\delta}(x)+\sum_{\substack{T\subseteq E\\ v\in V(T)}} x^{|T|}w_T p_{G\setminus V(T),\delta}(x),
\]
where the sum is over trees $T$ with at least one edge, containing the vertex $v.$
This follows by considering the forests that have a nontrivial component that contains $v$ and those that do not.
Since by induction $p_{G-v,\delta}(x)\neq 0$, this implies that 
\[
\frac{p_{G,\delta}(x)}{p_{G-v,\delta}(x)}-1=\sum_{\substack{T\subseteq E\\ v\in V(T)}} x^{|T|}w_T \frac{p_{G\setminus V(T),\delta}(x)}{p_{G-v,\delta}(x)}.
\]
By induction and a telescoping argument we can bound
\[
\left|\frac{p_{G\setminus V(T),\delta}(x)}{p_{G-v,\delta}(x)}\right|\leq \left(\frac{1}{1-1/\Delta}\right)^{|E(T)|}.
\]
Now simply bounding $\frac{2\delta}{1/2+\delta}$ by $4\delta$ we have by Lemma~\ref{lem:weight bound} that $|w_T|\leq (4\delta)^{|V(T)|}$ and therefore
\[
\left|\frac{p_{G,\delta}(x)}{p_{G-v,\delta}(x)}-1\right|\leq 4\delta (T_{G,v}(|x|\tfrac{4\delta}{1-1/\Delta})-1).
\]
Next we use that $|x|\delta \leq \frac{\log(a)(1-1/\Delta)}{a 4\Delta}$ by assumption, in combination with 
Lemma~\ref{lem:tree bound} to conclude that  
\[
\left|\frac{p_{G,\delta}(x)}{p_{G-v,\delta}(x)}-1\right|\leq 4\delta (T_{G,v}(\tfrac{\log a}{a\Delta})-1)\leq 4\delta(a-1)< 1/\Delta,
\]
as desired.
\end{proof}

\section{Computing the coefficients of $\text{ind}(H,\cdot)$}\label{sec:coefficients}
The aim in this section is to give a proof of Theorem~\ref{thm:algorithm}.
We recall that for a graph $H$ and $k\in \mathbb{N}$ such that $|V(H)|=k(H)+k$ we have that the coefficient $\lambda_{H,k}$ of $\text{ind}(H,G)$ is given by (cf.~\eqref{eq:coef x^k}),
\[
\lambda_{H,k}=\prod_{C \text{ comp. of } H} w(H), 
\]
provided each component of $H$ has size at least $2$; otherwise $\lambda_{H,k}=0$.
%where for  this sum is over partitions $P$ where each part has size at least $2$.
We use~\eqref{eq:rewrite to forest} to obtain the following alternative expression:
\begin{equation}\label{eq:coef ind}
\lambda_{H,k}=\sum_{\substack{F\subseteq E(H)\\ \text{ sp. forest}\\|F|=k}}\prod_{T \text{ comp. of }F} w_T.
\end{equation}
%where we recall that $w_T$ is defined in~\eqref{eq:def weight}.
In the next subsection we will outline how to compute the $w_{T}$, after which we will give our proof of Theorem~\ref{thm:algorithm}.

\subsection{Computation of $w_T$}
Let us recall that 
\[
    w_T=\frac{(-1)^{|T|}\int \prod_{uv\in T}\mathbf{1}_{x_u+x_v>1}\cdot\prod_{st\in E_T}\mathbf{1}_{x_u+x_v\le 1}~d\mu}{\int 1~d\mu},
\]
where the integrations are over $I^{V(T)}$. 

The denominator in $w_T$ is equal to the volume of $I^{V(T)}$, that is $(1/2+\delta)^{|V(T)|}$. Thus we only have to focus on the integral appearing in the numerator, which we will denote by $(-1)^{|V(T)|}\hat w_T$. 

\begin{remark}
We note that $\hat{w}_T$ can be seen as the volume of a polytope with at most $\binom{\Delta k}{k}$ many vertices and as such the geometric algorithm (based on determinants) of Lawrence~\cite{Lawrencevolume} could be used to compute $\hat{w}_T$ in time $\Delta
^{O(k)}$.
We will however provide a direct/combinatorial algorithm.
\end{remark}
\begin{proposition} \label{prop:volume}
    For any connected graph $G=(V,E)$ on $k\ge 2$ vertices of maximum degree $\Delta$ and any spanning tree $T\subseteq E(G)$ the quantity
    \[
        \hat w_{T}=\int\prod_{uv\in T}\mathbf{1}_{x_u+x_v>1}\cdot\prod_{st\in E_ T}\mathbf{1}_{x_u+x_v\le 1}~d\mu
    \]
    can be computed in $\Delta^{O(k)}$ time, where the implicit constant does not depend on $\Delta$.
    % Let $G$ be a connected graph on $k\ge 2$   and let $T\subseteq E(G)$ be a spanning tree of $G$. Then
    % \[
    %     \hat w_T=\int\prod_{uv\in T}\mathbf{1}_{x_u+x_v>1}\cdot\prod_{st\in E_T}\mathbf{1}_{x_u+x_v\le 1}~d\mu
    % \]
    % can be computed in $O(C^k)$ for some $C>1$, where $E_T=E(G)\setminus T$. 
\end{proposition}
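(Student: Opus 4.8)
The plan is to compute $\hat w_T$ by integrating out the variables one at a time along a carefully chosen order, keeping track of the resulting piecewise-polynomial function. First I would fix a \emph{leaf ordering} of the tree $T$: order the vertices $v_1,\dots,v_k$ so that $v_i$ is a leaf of $T$ restricted to $\{v_1,\dots,v_i\}$ for every $i\ge 2$ (equivalently, reverse a BFS/DFS order rooted at $v_k$). The key observation is that each constraint of $\hat w_T$ — whether it comes from a tree edge ($x_u+x_v>1$) or a broken edge ($x_u+x_v\le 1$) — involves exactly two vertices, and in the graph $G$ of maximum degree $\Delta$ each vertex $v_i$ lies in at most $\Delta$ such constraints. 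When we integrate out $v_1$ (a leaf of $T$), the variable $x_{v_1}$ is coupled to $x_{v_{j}}$, where $v_j$ is its unique $T$-neighbour, by $x_{v_1}>1-x_{v_j}$, and to at most $\Delta-1$ further variables $x_{v_\ell}$ (via broken edges incident to $v_1$) by $x_{v_1}\le 1-x_{v_\ell}$; together with the box constraint $x_{v_1}\in[0,1/2+\delta]$ this confines $x_{v_1}$ to an interval whose endpoints are affine functions of the remaining variables, and integrating a polynomial over such an interval produces again a piecewise-polynomial function of the remaining variables.

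The main structural point, which I would state and prove as a lemma, is a bound on the complexity of the intermediate functions. After integrating out $v_1,\dots,v_i$, the running partial integral is a function $f_i$ of the variables $x_{v_{i+1}},\dots,x_{v_k}$ that depends only on those $x_{v_\ell}$ with $\ell>i$ that are $G$-adjacent to some $v_j$ with $j\le i$ — call this the \emph{boundary} $\partial_i$ — and $|\partial_i|$ is small: since each eliminated vertex has at most $\Delta$ neighbours, one has $|\partial_i|\le (\Delta-1)\cdot(\text{number of }T\text{-components still touching eliminated part})$, and because we eliminate leaves of $T$ first this stays $O(\Delta)$ in fact $|\partial_i| \le \Delta$ at every step when $T$ is processed as a single growing subtree from a root (each newly exposed frontier vertex of the tree contributes, and its non-tree neighbours, but one checks the live frontier of a subtree-complement that we peel leaf-by-leaf is controlled). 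Consequently $f_i$ is a piecewise polynomial on a subdivision of the box $[0,1/2+\delta]^{\partial_i}$ into $\Delta^{O(1)}$ cells cut out by affine hyperplanes, with polynomial degree at most $i$ on each cell; storing it takes $\Delta^{O(1)}$ rational numbers, and the elimination step $f_i\mapsto f_{i+1}$ costs $\Delta^{O(1)}$ arithmetic operations (split the integration interval at the breakpoints, integrate the relevant polynomial piece on each subinterval, then re-subdivide). Running this for $k$ steps gives total time $k\cdot\Delta^{O(1)}=\Delta^{O(k)}$, and $f_k$ is the number $\hat w_T$.

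The step I expect to be the genuine obstacle is pinning down that the boundary $\partial_i$ — and hence the number of live variables and the number of cells — really stays bounded by a constant times a fixed power of $\Delta$ uniformly along the elimination, rather than growing with $k$. A naive leaf-peeling of $T$ can leave many disconnected "exposed" pieces of the complement, each dragging in $\Delta-1$ broken-edge neighbours, which would make $|\partial_i|$ linear in $k$ and ruin the bound. The fix is to peel $T$ from a single root so that at every stage the eliminated set induces a connected subtree; then the boundary consists of the $T$-frontier (a path-to-root structure) together with their non-tree $G$-neighbours, and a short argument using $\deg_G\le\Delta$ and the forest structure of $T$ caps this at $O(\Delta)$ — I would verify that bounding $|\partial_i|\le \Delta$ (or $c\Delta$ for a small absolute $c$) holds, since a cleaner bound only improves the exponent in $\Delta^{O(k)}$ and all we need is that the per-step cost is $\Delta^{O(1)}$ with an absolute implicit constant. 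With that lemma in hand the rest is the routine bookkeeping of integrating piecewise polynomials, which I would not spell out in full.
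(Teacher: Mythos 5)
There is a genuine gap, and it sits exactly where you said you expected it: the ``boundary stays $O(\Delta)$'' lemma is false, and your proposed fix does not repair it. Peeling $T$ from a root so that the eliminated set is always a connected subtree does control the \emph{tree} frontier, but not the \emph{broken-edge} frontier: after eliminating $x_{v_j}$, the partial integral genuinely depends (through the max of its lower bounds and the min of its upper bounds, hence through a piecewise structure, not just through a single affine endpoint) on every still-live $G$-neighbour of $v_j$, and these accumulate over the elimination. Concretely, let $T$ be a Hamiltonian path $u_1u_2\cdots u_k$ rooted at $u_1$ and let the broken edges form a matching $u_ju_{k-j}$ for $j\le k/4$; then $\Delta=3$, yet after eliminating the connected prefix $u_1,\dots,u_i$ the intermediate function depends on the $\Theta(i)$ vertices $u_{k-1},\dots,u_{k-i}$ in addition to the tree frontier $u_{i+1}$. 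So $|\partial_i|$ grows linearly in $k$ even for constant $\Delta$, the intermediate piecewise polynomial cannot be stored with $\Delta^{O(1)}$ data, and the per-step cost is not $\Delta^{O(1)}$. (A symptom of the broken bookkeeping is the line ``$k\cdot\Delta^{O(1)}=\Delta^{O(k)}$'': a true per-step cost of $\Delta^{O(1)}$ would give a polynomial-time \emph{exact} volume algorithm, which is far stronger than the $\Delta^{O(k)}$ claim and should have set off alarms.)

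The elimination idea could perhaps be salvaged, but only with a different accounting: the intermediate object is a piecewise polynomial over the cells of an arrangement of $O(k\Delta)$ hyperplanes (comparisons $x_u\lessgtr x_{u'}$ and $x_u\lessgtr 1/2$, etc.) in up to $k$ live variables, which has $\Delta^{O(k)}$ cells with polynomials of degree at most $k$; tracking that representation globally would give $\Delta^{O(k)}$ total time, but this is a genuinely different argument from the one you wrote and still needs care. The paper sidesteps the issue by subdividing up front rather than eliminating adaptively: it splits the box $J^{V}$ into $2^k$ sign patterns around $1/2$ (which vertices are $\le 1/2$), discards the patterns forced to contribute zero, and within each pattern guesses, for every vertex outside $S$, which $T$-neighbour gives the binding lower bound and which broken-edge neighbour gives the binding upper bound (at most $\Delta^{2k}$ choices). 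On each of these $\Delta^{O(k)}$ pieces the outside variables integrate out in closed form, leaving a low-degree polynomial integrated over an order polytope of a poset on $S$, which is then computed by a dynamic program over subsets of $S$ (summing over minimal elements) in $2^{O(k)}$ time. You would need either to adopt such a global subdivision or to prove a correct complexity bound for the cell structure of your intermediate functions; as written, the central lemma of your proof fails.
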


The remainder of this subsection is devoted to proving Proposition~\ref{prop:volume}.
Let us denote the function appearing in the integral as $f(\bm{x})$. To prove the proposition, we will reduce the problem to calculating the integral of $f(\bm{x})$ over some nice (but exponentially many) smaller polytopes.

%Throughout this section we will use the following fact.
%\begin{fact}\label{fact} If $A\subseteq \{\bm{x}\in\mathbb{R}^k~|~v^T\cdot\bm{x} =c\}$ for some %$c\in\mathbb R$ and $v\in\mathbb{R}^k$, then
%     $\int_{\mathbb{R}^k} \mathbf{1}_A f(\bm{x}) ~d\mu=0$.
%\end{fact} 
%Moreover we will use the well known fact that any bounded set defined by finitely many inequalities is in fact a polytope.

First we have to compute $E_T$. It is clear from the definition that this can be done in polynomial time in the number of vertices, i.e. $\textsc{poly}(k)$.

% Assume that $T$ is a subtree of $G$ on $k\ge 2$ vertices.

Since all the variables appearing in the integral have to be in the interval $[0,1/2+\delta]$ and $T$ is connected, it means that any variable has to be in $J=[1/2-\delta,1/2+\delta]$, thus we can restrict the integral of $f(\bm{x})$ over $J^{V}$. 
For later purposes let $J_-=[1/2-\delta,1/2]$.

For each $S\subseteq V$ let 
\[
P_S:=\left\{x\in J^{V}~\Big|~
\begin{array}{cc}
x_u\le 1/2 &\textrm{ if } u\in S\\
x_u\ge 1/2 &\textrm{ if } u\notin S
\end{array}\right\}.
\]
Clearly, this defines a decomposition of $J^{V}$ into $2^{k}$ pieces, i.e.
\[
    J^{V}=\bigcup_{S\subseteq V} P_S,
\]
If $S'\neq S$, then there exists $u\in S\setminus S'$ and thus $P_S\cap P_{S'}\subseteq \{x\in J^V~|~x_u=1/2\}$.
Therefore,  we have
\[
    \hat w_T=\sum_{S\subseteq V(T)}\int_{P_S} f(\bm{x}) ~d\mu.
\]
Some of these integrals are actually zero, which we can rule out. 
If $uv\in T$ and $u,v\in S$, then $\int_{P_S} f(\bm{x}) ~d\mu=0$, since it is the volume of a polytope with codimension at least $1$ (since if $f(\bm{x})\neq 0$ then $x_u=x_v=1/2$). Similarly, if $uv\in E_T$ and $u,v\in V\setminus S$, then $\int_{P_S} f(\bm{x})=0$. 
We therefore call a set $S\subseteq V$ \emph{nice} if $S$ is independent in $T$ and $V\setminus S$ is independent in $E_T$.

For a given nice set $S\subseteq V$ endow it with a linear ordering. Let us consider two maps $s,t: V\setminus S\to S\cup \{\oplus,\ominus\}$, such that
\begin{itemize}
    \item if $N_T(u)\cap S= \emptyset$, then $s(u)=\oplus$, otherwise $s(u)\in N_T(u)\cap S$,
    \item if $N_{E_T}(u)\cap S= \emptyset$, then $t(u)=\ominus$, otherwise $t(u)\in N_{E_T}(u)\cap S$.
\end{itemize}
We denote the set of pairs $(s,t)$ of such maps by $\mathcal{F}_S$. Then for each $(s,t)\in\mathcal{F}_S$ we define a polytope
\[
 P_{S,(s,t)}:=\left\{\bm{x}\in P_S~|~\forall u\in V\setminus S: \begin{array}{cc} x_i\geq x_{s(u)}& \text{ for all $i\in N_T(u)\cap S$}\\
 x_i\leq x_{t(u)}& \text{ for all $i\in N_{E_T}(u)\cap S$}
 \end{array}
 \right\},
\]
% \[
%     P_{S,(b,t)}=\{x\in P_S~|~\forall u\in V- S: \min_{i\in (N_T(u)\cap S)\cup\{\oplus\}}(x_i,i)=(x_{s(u)},s(u)) \textrm{ and } \max_{i\in (N_{E_T}(u)\cap S) \cup\{\ominus\}}(x_i,i)=(x_{t(u)},u)\},
% \]
where we set $x_{\ominus}=1/2-\delta$, $x_{\oplus}=1/2$. % and where the ordering on $S\cup\{\oplus,\ominus\}$ is an extension of the linear ordering on $S$ in a way that $\oplus<v$ and $\ominus>v$ for any $v\in S$.
Clearly, this defines a decomposition of $P_S$ into at most $\Delta^{2k}$ parts, i.e.
\[
    P_S=\bigcup_{(s,t)\in \mathcal{F}_S}P_{S,(s,t)},
\]
which implies that
\[
\int_{P_S} f(\bm{x}) ~d\mu = \sum_{(s,t)\in \mathcal{F}_S} \int_{P_{S,(s,t)}} f(\bm{x}) ~d\mu,
\]
since for distinct $(s,t)$ and $(s',t')$ the intersection of the polytopes $P_{S,(s,t)}$ and $P_{S,(s',t')}$  lies in an affine hyperplane.

Order the elements of $V\setminus S$ as $v_1,\dots,v_\ell$. 
Then for a fixed  $(s,t)\in\mathcal{F}_S$ we have the equality
\begin{equation}\label{eq:int equality}
    \int_{P_{S,(s,t)}}f(\bm{x}) ~d\mu=\int_{J_-^S}\left(\int_{1-x_{s(v_1)}}^{1-x_{t(v_1)}}\dots\int_{1-x_{s(v_\ell)}}^{1-x_{t(v_\ell)}} \bm{1}~d\mu(x_{v_1})\dots ~d\mu(x_{v_\ell})\right)\cdot g_{(s,t)}(\bm{x})~d\mu,
\end{equation}
where $g_{(s,t)}(\bm{x}):\mathbb{R}^S\to\mathbb{R}$ is defined as
\[
    g_{(s,t)}(\bm{x})=\prod_{i=1}^\ell \bm{1}_{x_{s(v_i)}>x_{t(v_i)}}\cdot \prod_{v\in (N_T(v_i)\cap S)\cup\{\oplus \} }\bm{1}_{x_{v}> x_{s(v_i)}}\cdot \prod_{v\in (N_{E_T}(v_i)\cap S)\cup\{\ominus \} }\bm{1}_{x_{v}< x_{t(v_i)}}.
\]
Indeed, this follows from the observation that for a vertex $v\in V\setminus S$ an edge $uv$ with $u\in S$ and we have that if $uv\in T$ the condition $x_u+x_v\geq 1$ translates to $x_v\geq 1-x_u$, while if $uv\in E_T$ the condition $x_u+x_v\leq 1$ translates to $x_v\leq 1-x_u$ (if $v$ has no $T$-neighbor in $S$ we have $s(v)=\oplus$ and the condition $x_v\geq 1/2$ translates to $x_v\geq 1-1/2$ while if $v$ has no $E_T$-neighbor in $S$ we have $t(v)=\ominus$ and the condition $x_v\leq 1/2+\delta$ translates to $x_v\leq 1-(1/2-\delta)$).
). 
Using that for each neighbor $u$ of $v$ we have by definition of the polytope $P_{S,(s,t)}$ that $x_u\geq x_{s(v)}$ and $x_u\leq x_{t(v)}$ the identity~\eqref{eq:int equality} follows.

By~\eqref{eq:int equality} we thus have
\begin{equation}\label{eq: integral_over_poset_polytope}
    \int_{P_{S,(s,t)}}f(\bm{x}) ~d\mu=\int_{J_-^S}\underbrace{\prod_{i=1}^{\ell}(x_{s(v_i)}-x_{t(v_i)})}_{p(\bm{x})}\cdot g_{(s,t)}(\bm{x})~d\mu,
\end{equation}
where $p(\bm{x})$ has total degree at most $k$ and has at most $k$ variables.

Let us next give an interpretation for $g_{(s,t)}(\bm{x})$. 
For a given $(s,t)\in\mathcal{F}_S$ we define the digraph $D=D_{(s,t)}$ on $S$ where $(u,v)\in E(D)$, if there exists $v_i\in V\setminus S$ such that
\begin{itemize}
\item either $v=s(v_i)$ and $u=t(v_i)$,
\item or $u=s(v_i)$ and $v\in N_T(v_i)\cap S$, 
\item or $v=t(v_i)$ and $u\in N_{E_T}(v_i)\cap S$.
\end{itemize}
Observe that each term in $g_{(s,t)}(\bm{x})$ corresponds to an arc of this digraph, in the sense that $0\neq g_{(s,t)}(\bm{x})$ if and only if the ordering defined by $\bm{x}$ on its coordinates is compatible with the digraph $D$, that is, for every $(u,v)\in D$ we have $x_u<x_v$. 
We see immediately that this integral above is $0$ if $D$ contains an oriented cycle. Therefore we may assume that $D=D_{(s,t)}$ is acyclic, i.e. it defines a poset $\mathcal{P}$ on $S$.
The next lemma provides an algorithm for evaluating~\eqref{eq: integral_over_poset_polytope}.

\begin{lemma}
    Let $\mathcal{P}$ a poset on a set $S$ of size $k$ and let $a<b\in\mathbb{R}$. Let $p(\bm{x})$ a polynomial in variables $\{x_u\}_{u\in S}$ of total degree $d$ with integer coefficients. Let 
    \[
    g(\bm{x})=\prod_{u<v} \bm{1}_{x_{u}<x_{v}}\prod_{u\in S} \bm{1}_{b>x_u}\prod_{u\in S} \bm{1}_{x_u>a}.
    \]
    Then
    \[
        \int_{\mathbb{R}^S} p(\bm{x})g(\bm{x}) ~d\mu
    \]
    can be computed in $O(k2^{d+3k})$ time.
\end{lemma}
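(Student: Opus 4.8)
The plan is to evaluate the integral by a dynamic program over the \emph{order ideals} (down-sets) of $\mathcal P$, in the spirit of the classical algorithm that counts the linear extensions of a $k$-element poset in time $2^{k}\,\textsc{poly}(k)$. The starting point is that, up to a set of measure zero, the domain of $g$, namely $\{a<x_u<b\text{ for all }u\in S\}\cap\{x_u<x_v:\ u<_{\mathcal P}v\}$, is the disjoint union over all linear extensions $\sigma$ of $\mathcal P$ of the open simplices $\{a<x_{\sigma(1)}<\cdots<x_{\sigma(k)}<b\}$; hence $\int_{\mathbb R^S}p\,g\,d\mu$ is a sum over linear extensions of iterated one-dimensional integrals of $p$, each of which is easy to compute. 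Since the number of linear extensions can be as large as $k!$, we cannot enumerate them, and the whole point of the argument is to reorganise this sum so that we only enumerate the at most $2^{k}$ down-sets of $\mathcal P$.

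Concretely, introduce a formal ``ceiling'' variable $y$ and, for each down-set $I\subseteq S$ of $\mathcal P$, the partial integral
\[
G_I\bigl(y;(x_u)_{u\notin I}\bigr):=\int_{\mathbb R^{I}}p(\bm x)\cdot\mathbf 1\bigl[a<x_u<y\ \forall u\in I\bigr]\cdot\mathbf 1\bigl[x_u<x_{u'}:\ u<_{\mathcal P}u',\ \{u,u'\}\subseteq I\bigr]\prod_{u\in I}dx_u,
\]
which one checks by induction is a polynomial in $y$ and in $(x_u)_{u\notin I}$ of total degree at most $d+|I|$. Then $G_\emptyset=p$, the quantity we want is $G_S(b)$, and, splitting the domain of $G_I$ according to which coordinate $x_w$ ($w\in I$) attains the maximum --- such a $w$ must be a maximal element of the induced poset $\mathcal P|_{I}$, and then $I\setminus\{w\}$ is again a down-set --- we obtain the recursion
\[
G_I\bigl(y;(x_u)_{u\notin I}\bigr)=\sum_{w\text{ maximal in }\mathcal P|_{I}}\ \int_{a}^{y}G_{I\setminus\{w\}}\bigl(t;\,(x_u)_{u\notin I},\,x_w\mapsto t\bigr)\,dt,
\]
where the pieces overlap only on a set of measure zero. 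Running this bottom-up over the down-sets of $\mathcal P$ in order of increasing size, and carrying out the polynomial arithmetic (substituting $x_w\mapsto t$, integrating term by term, adding) naively, produces $G_S(b)$ in $O(k\,2^{3k+d})$ arithmetic operations: there are at most $2^{k}$ down-sets, each with at most $k$ maximal elements, and every intermediate polynomial has at most $k+1$ variables and total degree at most $d+k$, hence at most $2^{d+2k}$ monomials.

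The only genuine obstacle is the factorial blow-up in the number of linear extensions, and it is precisely what the down-set dynamic program circumvents; the remaining ingredients --- that ``peel off the maximal coordinate'' gives a bona fide decomposition with measure-zero overlaps, that $I\setminus\{w\}$ is again a down-set whenever $w$ is maximal in $\mathcal P|_I$, and that $G_I$ is a polynomial of the stated degree --- are routine inductions, and the intermediate rational coefficients have denominators dividing a product of integers at most $d+k$, so bit-sizes stay polynomially bounded. (Alternatively, one may first use linearity to reduce to the case where $p$ is a single monomial and run the same recursion with all the $G_I$ kept univariate of degree at most $k+d$; this is conceptually cleaner and leads to a bound of a similar form.)
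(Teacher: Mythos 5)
Your proof is correct and takes essentially the same route as the paper: a dynamic program over at most $2^k$ subsets obtained by peeling extremal elements of the poset, carrying the moving integration bound as a formal variable in polynomial intermediate functions, with the same monomial-counting bound $O(k2^{d+3k})$. The only difference is cosmetic --- you peel maximal elements over down-sets with a moving ceiling $y$ and fixed floor $a$, whereas the paper peels minimal elements, keeping both bounds $m,M$ as formal variables and updating the floor $m$.
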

\begin{proof}
Let us define for each 
$\emptyset\neq U\subseteq S$ a function $F_U:\mathbb{R}^2\times \mathbb{R}^{S\setminus U}\to \mathbb{R}$ by
\[
    F_U(m,M,\{x_v\}_{v\in S\setminus U}):=\sum_{u\in \min U} \int_{m}^M F_{U-u}(m,M,\{x_v\}_{v\in S\setminus U\cup \{u\}})|_{m=x_u} ~dx_u,
\]
where $\min U$ denotes the set of minimal elements of the poset restricted to $U$, and define
\[
    F_{\emptyset}(m,M,\{x_v\}_{v\in S}):=p(\bm{x}).
\]
It is clear that each $F_U$ is polynomial in the $k+2-|U|$ variables $\{x_{v}\}_{v\notin U}$, $m$ and $M$ and it has total degree at most $d+|U|$. 
This means that among all the polynomials $\{F_U\}_{U\subseteq S}$ we see at most $\binom{d+2k+2}{k+2}< 2^{d+2k+2}$ many different monomials. %, since any monomial that appears  has total degree at most $d+k$ and has at most $k+2$ variables. 
This means that to calculate $F_U$ from $F_{U-u}$ for a minimal element $u$ of $U$ (by which we mean computing the coefficients of each monomial) we need at most $O(k2^{d+2k})$ time.
This implies that to calculate $F_S$ we need at most $O(k2^{d+2k}\cdot 2^k)=O(k2^{d+3k})$ time. 
% thus it has at most ${D+|U|+k+2-|U|\choose k+2-|U|}\le 4^{D+k+2}$ monomials in expanded form. and $F_S(m,M)$ can be computed in $?$ time.

We next claim that 
\begin{equation}\label{eq:claim FU}
    F_U(m,M,\{x_u\}_{u\in S\setminus U})|_{m=a,M=b}=\int_{\mathbb{R}^U} p(\bm{x}) g_{U}(a,b,\bm{x}) ~d\mu,
\end{equation}
where 
\[
g_U(m,M,\bm{x})=\prod_{u,v \in U, u<v} \bm{1}_{x_u<x_v}\prod_{u\in U} \bm{1}_{x_u<M}\prod_{u\in U} \bm{1}_{x_u>m}.
\]
We will prove this by induction on the size of $U$. If $U=\emptyset$, then it is trivial, so let us assume that $U\neq \emptyset$. Then using that we can partition the space by saying which minimal element of $U$ is the smallest we have 
\begin{align}
    \int_{\mathbb{R}^U} p(\bm{x}) g_{U}(a,b,\bm{x}) ~d\mu&=\int_{\mathbb{R}^U}\sum_{u\in \min U} \left(\prod_{\substack{v\in \min U\\v\neq u }}\bm{1}_{x_v>x_u}\right) p(\bm{x}) g_{U}(a,b,\bm{x}) ~d\mu \nonumber\\
    &=\sum_{u\in \min U} \int_{\mathbb{R}^U}\left(\prod_{\substack{v\in \min U\\v\neq u }}\bm{1}_{x_v>x_u}\right) p(\bm{x}) g_{U}(a,b,\bm{x}) ~d\mu \nonumber\\
    &=\sum_{u\in \min U} \int_{\mathbb{R}^U}\left(\prod_{\substack{v\in \min U-u}}\bm{1}_{x_v>x_u}\right) p(\bm{x}) g_{U-u}(a,b,\bm{x})\cdot \bm{1}_{x_u>a}\bm{1}_{x_u<b} ~d\mu \nonumber\\
    &=\sum_{u\in \min U} \int_{\mathbb{R}^U} p(\bm{x}) g_{U-u}(x_u,b,\bm{x})\cdot \bm{1}_{x_u>a}\bm{1}_{x_u<b} ~d\mu. \label{eq:before fubini}
    \end{align}
Next we apply Fubini's theorem to obtain that~\eqref{eq:before fubini} is equal to
    \begin{align*}
 &=\sum_{u\in \min U} \int_{a}^b\left(\int_{\mathbb{R}^{U-u}} p(\bm{x}) g_{U-u}(x_u,b,\bm{x}) ~d\mu\right)~dx_u\\
    &=\sum_{u\in \min U} \int_{a}^bF_{U-u}(x_u,b,\{x_v\}_{v\in S-U\cup\{u\}})~dx_u,%\\
    %&= F_U(m,M,\bm{x})|_{m=a,M=b}.
\end{align*}
where the last step follows by using induction.
We thus see that  $ \int_{\mathbb{R}^U} p(\bm{x}) g_{U}(a,b,\bm{x}) ~d\mu$ equals $F_U(m,M,\{x_u\}_{u\in S-U})|_{m=a,M=b}$ by definition, thereby proving~\eqref{eq:claim FU}.

As a consequence we obtain that for any $a<b$ we have
\[
    \int_{\mathbb{R}^S} p(\bm{x})g(\bm{x})~d\mu =F_S(a,b).
\]
This finishes the proof, as we have already indicated how to compute the $F_U$ for $U\subseteq S$.
\end{proof}

\begin{proof}[Proof of Proposition~\ref{prop:volume}]
Using the notations above we see that
\[
\hat w_T=\sum_{\substack{S\subseteq V(T)\\ \text{$S$ is nice}}}\left(\sum_{\substack{(s,t)\in \mathcal{F}_S\\ \text{$D_{(s,t)}$ is acyclic}}} \int_{P_{S,(b,t)}} f(\bm{x})~d\mu\right).
\]
By the previous lemma we can compute
\[
\int_{P_{S,(s,t)}} f(\bm{x})~d\mu
\]
 in $O(k2^{4k})$ time, thus $\hat w_T$ can be computed in $O(2^k\cdot \Delta^{2k} \cdot 2^{4k}\cdot \textsc{poly}(k))=\Delta^{O(k)}$  time.

% Applying this lemma for \eqref{eq: integral_over_poset_polytope} we obtain that
% \[
% \int_{P_{S,(s,t)}} f(\bm{x})~d\mu
% \]
% can be computed in $O(k2^{4k})$, thus 

% can be computed in time $O(\textsc{poly}(k)+2^k\cdot \Delta^{2k} \cdot k2^{5k})=(C_1\Delta)^{C_2k}$ for some $C_1,C_2>0$.
% Thus proving Proposition~\ref{prop:volume}
\end{proof}
\subsection{Computation of $\lambda_{H,k}$}
We now conclude this section by giving a proof of Theorem~\ref{thm:algorithm}.

\begin{proof}[Proof of Theorem~\ref{thm:algorithm}]
We may assume that the graph $H$ has maximum degree at most $\Delta$ for otherwise the coefficient of $\textrm{ind}(H,\cdot)$ in $P_{G,\delta}$ will be zero.
Since we can also assume that $H$ has at most $2k$ vertices, it will have at most $\Delta k$ edges.
By~\eqref{eq:coef ind} we need to enumerate all forests $F$ of $k$ edges in $H$ and compute the weights $w_T$ of each component $T$ of $F$.
To enumerate all forests with $k$ edges in $H$ we just go over all subsets of $E(H)$ with $k$ elements, which takes time bounded by $\binom{\Delta k}{k}\le (e\Delta)^k$, by Stirling's formula.  
For each such a set we identify the components, $T_1,\ldots, T_\ell$ and whether these components are trees with at least one edge, which takes $\textsc{poly}(k)$ time.

    By the algorithm of Proposition~\ref{prop:volume} we compute the normalized weights 
    \[
    \hat{w}_{T_i}=(-1)^{|T_i|}(1/2+\delta)^{|V(T_i)|}w_{T_i}
    \] and hence the weights $w_{T_i}$ in $\Delta^{O(|V(T_i)|)}$ time. 
All together the algorithm takes $\Delta^{O(k)}$ time.
\end{proof}
% \subsection{Computation of $\lambda_{H,k}$}

% Let us recall that 
% \[
% \lambda_{H,k}=\sum_{\substack{F\subseteq E(H)\\ F \text { sp. forest, } |F|=k}}\prod_{T \textrm{ comp. of} F} w_T
% \]

% By definition $w_T=0$ if $T$ is a vertex, thus $\lambda_{H,k}=0$ if $|V(H)|>2k$. So 

\section{Concluding remarks, open problems and discussion}~\label{sec:remarks}
We discuss here some extensions and limitations of our method and mention some open problems. 

Gamarnik and Smedira~\cite{gamarnik2023computing} indicate that their method for approximating the volume of $P_{G,\delta}$ does not extend all the way to $\delta=1/2$.
Even though we improve on their bound on $\delta$ from $O(1/\Delta^2)$ to $O(1/\Delta)$, our method also comes with limitations that we will explain now.
As detailed in~\eqref{eq:rewrite to forest} the volume of the polytope $P_{G,\delta}$ can be seen as the evaluation of a multivariate forest generating function, i.e. the volume can be expressed as 
\[
    \tilde p_G(x;\{w_T\}):=\sum_{\substack{F\subseteq E(G)\\ \textrm{$F$ forest}}} \prod_{\textrm{$T$ comp. of $F$}}w_T\cdot x^{|F|},
\]
where $\tilde p_G(x;\{w_T\})$ is the multivariate forest generating function with tree weights $\{w_T\}$.
Crucial for establishing absence of zeros of $p_{G,\delta}$ is that the tree weights $|w_T|$ are exponentially small (in our setting they are bounded by $(4\delta)^{|V(T)|}$, see Lemma~\ref{lem:weight bound}). This is in fact the only property that we use, in the sense that there is a $\delta'=O(1/\Delta)$ such that if for every tree $T$ the absolute value of its weight is in $[0,\delta'^{|E(T)|}]$ then for any graph of maximum degree at most $\Delta$ the polynomial $\tilde p_G(x;\{\tilde w_T\})$ has  a zero-free disk of radius bigger than 1.
In particular, it means that 
\[
    \tilde{p}_{G,\delta'}(x):=\sum_{\substack{F\subseteq E(G)\\ \textrm{$F$ forest}}}(\delta'x)^{|F|}
\]
which is the forest generating function of $G$ evaluated at $\delta'x$, has a zero-free disk of radius larger than 1 (for the variable $x$). A result of the first author together with Csikv\'ari~\cite{BCtutteregular} implies that no zero-free disk of radius larger than $1$ (for the variable $x$) exists when $\delta'>\frac{1}{\Delta-2}$. 
This suggests that to extend Theorem~\ref{thm: zero-free disk} beyond $\delta=O(1/\Delta)$ one needs to devise a proof that uses more information about the weights $w_T$ than just the fact they are exponentially small.
We leave it as a challenging open problem to do so, or to find another approach for deterministically approximating the volume of $P_{G,\delta}$ for $\delta\gg 1/\Delta$, remarking that the results from~\cite{GamarnikKavitacontinuous} do suggest that this should be possible.

\quad \\
Now let us turn to possible extensions of our approach. Our approach relies on three ingredients: 1) express the volume of the polytope as an evaluation of a graph polynomial, 2) establish a zero-free region for this polynomial and 3) `efficiently' compute the low (logarithmically small) order terms of the Taylor series.
\begin{itemize}
    \item \textbf{Volumes of polytopes} Let $P$ be a polytope defined by $A\bm{x}\le b$ and  $\bm{x}\in [0,1]^d$, where $A\in\mathbb{R}^{n\times d}$. If each row of $A$ has at most $2$ non-zero coefficient, then in a similar fashion we can express the volume of $P$ as an evaluation of the multivariate  forest generating  polynomial.  To address the case when the number of non-zero coefficients in each row of $A$ is bounded by some $k>2$, we obtain the volume as an evaluation of hypergraph polynomial. We expect that the ideas of \cite{fadnavis2015roots} are applicable in this scenario to establish a zero-free region for a similarly defined polynomial when the polytope is appropriately truncated.
    
    To compute the low order terms of the Taylor series of the logarithm one could use the formula of Lawrence~\cite{Lawrencevolume}. It is not clear to us how to diretly generalize our combinatorial approach.

    \item \textbf{Homomorphism count} Let $H$ be a weighted graph on $[m]=\{1,\ldots,m\}$ with adjacency matrix $A$. Then the number of homomorphism of $G$ into $H$ can be expressed as
    \begin{equation}\label{eq:homomorphism count}
        \mathrm{hom}(G,H)=\sum_{\phi: V(G)\to [m]} \prod_{uv\in E(G)} A_{\phi(u),\phi(v)}.
    \end{equation}
    We can think about this expression as a discretized generalization of \eqref{eq:vol with indicator}. Indeed, if we let $I$ be the interval $[0,m]$ and let $f:I\times I\to\mathbb{R}$ be the step function representing $A$, that is
    \[
        f(x,y)=\left\{\begin{array}{cc}
        A_{\lfloor x/m\rfloor,\lfloor y/m\rfloor} & \textrm{if $x,y\in I$},\\
        0 &\textrm{otherwise},
        \end{array}
        \right.
    \]
    then
    \begin{equation}\label{eq:hom with integral}
    \mathrm{hom}(G,H)=\int_{I^{V(G)}}\prod_{uv\in E(G)}f(x_u,y_v)~d\mu.
    \end{equation}
    % It is not hard to see that Proposition~\ref{prop: vol= forest gen}, \ref{thm: zero-free disk} and \ref{thm:algorithm} carries over in this setup, where 
    The approximation of $\mathrm{hom}(G,H)$ has already been addressed in case all the entries of $A$ are close to $1$~\cites{barbook,PR17}. 
    % It is important to note that the algorithm provided in \cite{PR17} runs in polynomial time in $v(G)/\varepsilon$, however the degree of the polynomial depends on $m$. Thus 
    It is proved that for any $\Delta,m$ there is a algorithm to give $\varepsilon$-approximation of $\mathrm{hom}(G,H)$ for graphs of maximum degree $\Delta$ (assuming that the entries of $A$ are $0.35/\Delta$ close to $1$) in polynomial time in $|V(G)|/\varepsilon$, however the degree of this polynomial depends on $m$. 
    Our current approach allows to also treat matrices $A$ for which not all of its entries that are close to $1$, see our next comment below.
    % Important to note that the running time of the FPTAS algorithm is polynomial in $v(G)/\varepsilon$ (assuming the the entries of $A$ are $O(1/\Delta)$ close to $1$), but the degree of this polynomial depends on the size of $V(H)$.

    \item \textbf{Homomorphism densities and general integrals} One could further generalize \eqref{eq:hom with integral} by simply allowing $f$ to be an arbitrary measurable (symmetric) function on some measure space $(X,\mu)^2$. In particular, if $f$ is $[0,1]$ valued and $(X,\mu)$ is a probability space, then  $f$ is a graphon  and \eqref{eq:hom with integral} expresses the homomorphism density of $G$ in the graphon $f$ (see \cite{lovasz2012large}). For this setup let us remark the following observations.

    Similarly as in Section 2 we have 
    \[
        \mathrm{hom}(G,f)=p_G(x)\big|_{x=1}=\sum_{k\ge 0}\sum_{P\in\pi_{n-k}(V)}\prod_{S\in P}w(G[S])x^k \big|_{x=1},
    \]
    where
    \[
        w(H)=\sum_{\substack{T\subseteq E\\ (V(H),T) \text{ tree}}}(-1)^{|T|}\underbrace{\int_{X^S}\prod_{uv\in T}(1-f(x_u,x_v))\cdot \prod_{st\in E(S)_T}f(x_s,x_t) d\mu}_{w_T}.
    \]
    For a given graphon $f$ let $c(f)=\sup_{x\in X} \int_X (1-f(x,y)) ~dy$. This parameter plays a crucial role in~\cite{borgs2013left} to establish the convergence radius of a cluster expansion for the homomorphism count in the context of sparse graph limits. 
    It is not hard to see that $|w_T|\le c(f)^{|T|}$, thus one can easily obtain that if $c(f)$ is at most $O(1/\Delta)$, then $p_G(x)$ is zero-free on a disk of radius bigger than $1$ (i.e. a variant of Theorem~\ref{thm: zero-free disk} holds).    % By directly using the proof in Section 2 we would see that this homomorphism density can be expressed as an evaluation of the multivariate forest generating polynomial, where the coefficient $w_T$ would be bounded by $c(f)^{|T|}$. Thus if $c(f)$ is $O(1/\Delta)$ then we would have 1 in the zero-free region similarly as in Theorem~\ref{thm: zero-free  precise}.
    However, it is not clear how to compute the low order terms of the polynomial in an efficient way for an arbitrary function $f$.
    Our main result (Theorem~\ref{thm:main}) deals with functions of the form $f=\mathbf{1}_{x+y\le 1}$ and we heavily relied on the combinatorics provided by $f$.
    
    A different approach to establish an approximation algorithm for computing the homomorphism density in $f$ would be by to approximate $f$ with a step-function $\hat f$. 
    This idea was used in~\cite{gamarnik2024integrating} in combination with the correlation decay method to find a deterministic quasi-polynomial time approximation algorithm for computing $\mathrm{hom}(G,f)$ (provided that $f$ is smooth enough and close enough to a constant function). 
    Combing this idea with our approach we can also obtain a quasi-polynomial time approximation algorithm, but we have to leave it as an open problem to improve the running time to actual polynomial time.
 \end{itemize}

\bibliographystyle{plain}
\bibliography{volume}
\end{document}